\newtheorem{theorem}{Theorem}[section]
\newtheorem{proposition}[theorem]{Proposition}
\newtheorem{corollary}[theorem]{Corollary}
\newtheorem{lemma}[theorem]{Lemma}
\newtheorem{definition}[theorem]{Definition}
\newtheorem{remark}[theorem]{Remark}
\newtheorem{question}[theorem]{Question}
\newcommand\R{\mathbb R} 
\newcommand\CS{\mathcal S} 
\newcommand\PS{\mathcal P} 
\newcommand\M{\mathcal M} 
\newcommand\supp{{\rm supp}}
\newcommand\wt{{\rm wt}}
\newcommand\tA{{\widetilde{A}}}
\newcommand\ts{{\tilde{s}}}
\newcommand\hV{{\widehat{V}}}
\newcommand\hE{{\widehat{E}}}
\title{Dense Clusters in Hypergraphs}
\author{Yuly Billig}
\address{School of Mathematics and Statistics, Carleton University, Ottawa, Canada}
\email{billig@math.carleton.ca}
\subjclass[2020]{Primary 05C42, 05C65; Secondary 05C85, 90C35}
\begin{document}

\maketitle

\begin{abstract}
In this paper we solve the problem of finding in a given weighted hypergraph a subhypergraph with a maximum possible density. We introduce the notion of a support matrix and prove that the density of an optimal subhypergraph is equal to $\| A^T A \|$ for an optimal support matrix $A$. Alternatively,  the maximum density of a subhypergraph is equal to the solution of a minimax problem for column sums of support matrices. We introduce the spectral decomposition of a hypergraph and show that it is a significant refinement of the Dulmage-Mendelsohn decomposition.
Our theoretical results yield an efficient algorithm for finding the maximum density subhypergraph and more generally, the spectral decomposition for a given weighted hypergraph.
\end{abstract}

\section{Introduction}

Hypergraphs are generalizations of graphs where hyperedges are allowed to contain any number of vertices. In this paper we introduce new tools for clustering in hypergraphs. The methods we develop are applicable in data science. For example, a video streaming company may wish to cluster their customers by videos that customers have streamed. The data here may be represented as a hypergraph with videos being vertices and customers represented by hyperedges, where the support of a hyperedge is the set of videos streamed by the given customer. A cluster of customers with similar interests will correspond to a high density subhypergraph in the hypergraph of all customer records.

We define the density $\alpha(H)$ of a hypergraph $H$ as the ratio of the number of its hyperedges to the number of its vertices. In this paper we solve the following

\noindent
{\bf Densest Subgraph Problem:}
In a given hypergraph find a subhypergraph of maximum density.

Of course, the results obtained in this paper are applicable to graphs, being special cases of hypergraphs. For brevity, subhypergraphs will be called subgraphs in this paper.

For graphs, a solution of the Densest Subgraph Problem (DSP) was given by Goldberg \cite{Go} in 1984. Goldberg's algorithm finds the optimal solution of DSP by an iterative process, where each step consists of solving a maximum flow - minimum cut problem for a certain network associated with a graph, with capacities of the network being updated on each iteration.

Goldberg's algorithm was generalized to the setting of weighted hypergraphs by Hu, Wu, and Chan in \cite{HWC}. For a hypergraph $H$ with $n$ vertices and $m$ hyperedges the family of networks constructed in \cite{HWC} 
has the following parameters: the number of vertices is $N = n + m + 2$, the number of edges is 
$M = D + 2n$, where $D$ is the sum of the degrees of vertices in hypergraph $H$. 

Goldberg and Tarjan review the fastest known algorithms for max-flow problem in \cite{GT}. To date the best algorithm for solving max-flow problem is due to Goldberg and Rao \cite{GR} and has complexity $O(M \min(\sqrt{M}, N^{\frac{2}{3}}) \log(N^2 / M))$. Thus one iteration of a max-flow based algorithm for DSP can be implemented with complexity 
\break
$O(D \min(\sqrt{D}, (n+m)^{\frac{2}{3}}) \log((n+m)^2/D))$. The number of iterations required for this algorithm is $O(\log(n))$.

In the present paper we introduce a new iterative algorithm for DSP where each iteration has a much better complexity $O(D)$ and its space requirement is also $O(D)$. For hypergraphs  with integer weights we have a termination condition for the algorithm, which will guarantee that the constructed subgraph is optimal. Further research is required to establish the rate of convergence of our algorithm, however experiments suggest that for generic hypergraphs the algorithm terminates after $O(\log(n))$ iterations.

There are also fast greedy algorithms for DSP which do not produce the optimal solution, see e.g., \cite{C}.

We introduce the notion of a support matrix, which will be our main tool for solving this problem.
Let $V = \{ v_j \, | \, j=1,\ldots,n \}$, $E = \{ e_i \, | \, i=1,\ldots,m \}$ be vertices and hyperedges of $H$.
 
\begin{definition} 
For a hypergraph $H$ a support matrix $A$ is an $m \times n$ real matrix with
the following properties:

(1) $a_{ij} \geq 0$ for all $i = 1, \ldots, m$, $j=1,\ldots,n$.

(2) If vertex $v_j$ does not belong to hyperedge $e_i$ then $a_{ij} = 0$.

(3) $\sum\limits_{j=1}^n a_{ij} = 1$ \ for all $i = 1, \ldots,m$.
\end{definition}

When we cluster hyperedges, it is natural to look at the vertices they share. However, when assessing how close are two hyperedges to each other, the fact that they share a vertex that belongs to many other hyperedges is not as significant as the fact that they share a vertex of a low degree. Support matrices will allow us to automatically take this consideration into account.

For a given hypergraph $H$, the set of its support matrices forms a simplex $\CS(H)$. Our approach is to perform an optimization procedure in $\CS(H)$.

For $A \in \CS(H)$, matrix $A^T A$ is a symmetric matrix with non-negative real eigenvalues. The norm $\| A^T A \|$ is equal to its dominant eigenvalue. We relate a maximum density subgraph in $H$ to the norm of $A^T A$ for an optimal support matrix $A$.

\begin{theorem}
$$\max\limits_{H^\prime \subset H} \alpha(H^\prime) = \min\limits_{A \in \CS(H)} \| A^T A \|.$$
\end{theorem}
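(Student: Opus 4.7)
My plan is to sandwich the two quantities with matching inequalities, using a Rayleigh-quotient lower bound on $\|A^T A\|$ and a Perron--Frobenius upper bound, and then to exhibit an optimal support matrix via a max-flow argument.

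First I would handle the easy direction $\max_{H^\prime \subset H} \alpha(H^\prime) \leq \min_{A \in \CS(H)} \|A^T A\|$. Fix a support matrix $A$ and a subgraph $H^\prime = (V^\prime, E^\prime)$, and plug the indicator vector $\chi_{V^\prime} \in \R^n$ into the Rayleigh quotient for $A^T A$. Properties (2) and (3) force $(A\chi_{V^\prime})_i = 1$ whenever $e_i \in E^\prime$, since every vertex of such an $e_i$ lies in $V^\prime$ and row $i$ of $A$ is supported on $e_i$ with sum $1$. Hence $\|A\chi_{V^\prime}\|^2 \geq |E^\prime|$ and $\|\chi_{V^\prime}\|^2 = |V^\prime|$, so
$$\|A^T A\| \geq \frac{\|A\chi_{V^\prime}\|^2}{\|\chi_{V^\prime}\|^2} \geq \frac{|E^\prime|}{|V^\prime|} = \alpha(H^\prime).$$

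For the reverse direction I would combine two ingredients. The first is a Perron--Frobenius row-sum bound: since $A^T A$ is symmetric and non-negative, its norm equals its spectral radius, which is at most its maximum row sum. Using property (3) one computes $\sum_k (A^T A)_{jk} = \sum_i a_{ij} \sum_k a_{ik} = \sum_i a_{ij}$, so $\|A^T A\| \leq \max_j \sum_i a_{ij}$ for every $A \in \CS(H)$. Thus it suffices to exhibit an $A^* \in \CS(H)$ whose maximum column sum equals $\alpha^* := \max_{H^\prime} \alpha(H^\prime)$.

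Producing such an $A^*$ is the main obstacle. The plan is a max-flow construction: build a bipartite network with a source sending a unit-capacity arc to each hyperedge $e_i$, large-capacity arcs $e_i \to v_j$ for $v_j \in e_i$, and arcs $v_j \to$ sink of capacity $\alpha^*$; the flow values along the middle arcs define the entries $a^*_{ij}$, and a saturating flow of value $m$ produces a valid support matrix. By max-flow/min-cut, the existence of such a flow reduces to checking $\alpha^* \cdot |V_S| \geq |E(H[V_S])|$ for every $V_S \subseteq V$, which is precisely the defining inequality of $\alpha^*$ as the maximum induced density. Given $A^*$, the Perron--Frobenius step yields $\|A^{*T} A^*\| \leq \alpha^*$, while the Rayleigh step applied with the densest subgraph yields $\|A^{*T} A^*\| \geq \alpha^*$, so $\min_{A} \|A^T A\| = \alpha^*$. (An alternative route to the matching step, which I expect would give the equivalent minimax characterization in terms of column sums mentioned in the abstract, is LP duality applied to the standard densest-subgraph LP, using a threshold-rounding level-set argument to establish integrality.)
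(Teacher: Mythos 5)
Your proposal is correct, but it reaches the result by a genuinely different route in both halves. For the lower bound on $\| A^T A \|$ you use the same Rayleigh-quotient argument with the indicator vector of $V^\prime$ as the paper does. For the matching upper bound, however, you reduce the eigenvalue statement to the column-sum statement via the elementary fact that the spectral radius of the non-negative matrix $A^T A$ is at most its maximum row sum, and that $\sum_k (A^T A)_{jk} = s_j(A)$; the paper instead builds an optimal $A$ block-by-block from the spectral decomposition, exhibits explicit non-negative eigenvectors $x^{(1)},\ldots,x^{(k)}$ with eigenvalues $\alpha_1,\ldots,\alpha_k$, and invokes the Perron--Frobenius theorem to conclude the dominant eigenvalue is $\alpha_1$. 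Your row-sum bound is shorter and cleanly exposes that the eigenvalue theorem is a corollary of the minimax column-sum theorem. For the existence of a support matrix with $s_{\max}(A^*) = \alpha^*$, you use a max-flow/min-cut construction (essentially Goldberg's network, with the min-cut condition being exactly the density bound), whereas the paper argues by Farkas' lemma on the infeasibility of a linear program. Both are valid; what the paper's Farkas argument buys is the stronger conclusion that one can force \emph{every} column sum on the densest part to equal $\alpha^*$ exactly (Proposition \ref{existence}(a)), which is what drives the block structure and the spectral-decomposition and duality results later --- your flow argument only directly yields $s_j \le \alpha^*$, which suffices for the present theorem but not for those refinements. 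Your sketch of the min-cut verification is compressed but the reduction is standard and correct.
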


There is another optimization procedure in $\CS(H)$ which yields the same optimal support matrices. By definition, row sums in a support matrix are equal to 1. Let us consider column sums
$s_j(A) = \mathop\sum\limits_{i=1}^m a_{ij}$. Set $s_{\max}(A) = \max\limits_{j=1,\ldots,n} s_j(A)$.

\begin{theorem}
\label{smax}
$$\max\limits_{H^\prime \subset H} \alpha(H^\prime) = \min\limits_{A \in \CS(H)} s_{\max}(A).$$
\end{theorem}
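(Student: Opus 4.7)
The plan is to establish both inequalities directly, using only the definition of a support matrix together with max-flow/min-cut; Theorem~1.1 is not needed (though it gives a quick alternative route for the upper bound).

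For $\max_{H'\subset H}\alpha(H') \le \min_{A\in\CS(H)} s_{\max}(A)$, fix any subgraph $H'$ with hyperedges $E'$ and vertices $V'$, and any $A\in\CS(H)$. Condition~(2) forces $a_{ij}=0$ whenever $i\in E'$ and $j\notin V'$, so condition~(3) gives $\sum_{j\in V'}a_{ij}=1$ for each $i\in E'$. Summing and swapping order,
$$|E'| = \sum_{i\in E'}\sum_{j\in V'}a_{ij} = \sum_{j\in V'}\sum_{i\in E'}a_{ij} \le \sum_{j\in V'}s_j(A) \le |V'|\cdot s_{\max}(A),$$
whence $\alpha(H')\le s_{\max}(A)$. (Alternatively, one can combine Theorem~1.1 with the identity that the $j$-th column sum of $A^T A$ is $\sum_k\sum_i a_{ij}a_{ik}=\sum_i a_{ij}=s_j(A)$, so $\|A^TA\|\le\|A^TA\|_1=s_{\max}(A)$.)

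For the reverse inequality, set $\alpha^*=\max_{H'\subset H}\alpha(H')$. I would exhibit $A\in\CS(H)$ with $s_{\max}(A)\le\alpha^*$ via a flow network: source $s$, sink $t$, a node for every hyperedge and every vertex, arcs $s\to e_i$ of capacity $1$, arcs $e_i\to v_j$ of infinite capacity whenever $v_j\in e_i$, and arcs $v_j\to t$ of capacity $\alpha^*$. A flow of value $m$ saturates every source arc, and setting $a_{ij}$ to be the flow along $e_i\to v_j$ produces a support matrix with $s_j(A)\le\alpha^*$ for every $j$. To verify min-cut equals $m$, inspect any finite cut: the infinite arcs force the vertex set $V_s$ on the source side to contain $\bigcup_{e\in E_s}e$, where $E_s$ is the hyperedge set on the source side; taking $V_s$ minimal yields cut value $(m-|E_s|)+\alpha^*|V_s|$. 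Since $(V_s,E_s)$ is a subgraph of $H$, the maximality of $\alpha^*$ gives $|E_s|\le\alpha^*|V_s|$, so every finite cut is at least $m$, attained by $E_s=\emptyset$.

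The main obstacle is producing an explicit $A$ that witnesses the lower bound. Spectral arguments starting from a Theorem~1.1 optimizer $A^*$ can work but require analysing the Perron-Frobenius structure of $(A^*)^T A^*$; the network-flow encoding above converts the problem to a Hall-type feasibility question resolved by inspection of the elementary cuts, which I expect to be the cleanest route. An LP-duality proof pairing the standard LP relaxation of densest subgraph with the column-sum LP is possible but would require setting up the primal and dual explicitly.
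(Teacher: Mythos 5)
Your proof is correct, but it takes a genuinely different route from the paper's. For the upper bound you reproduce essentially the paper's argument (Proposition 2.6, the weighted row/column double count). For the lower bound, however, the paper does not use a flow network: it phrases the existence of a support matrix with all column sums at most $\alpha^*$ as a linear feasibility problem with equality constraints $[e_i]$ and inequality constraints $[v_j]$, and derives a contradiction from Farkas' Lemma, using as inputs the Hu--Wu--Chan LP characterization of the densest subgraph and a duality lemma for the dual hypergraph $H^*$. Your max-flow/min-cut construction (Goldberg's network: unit-capacity source arcs into hyperedges, infinite arcs $e_i\to v_j$, capacity-$\alpha^*$ arcs into the sink) is a clean and self-contained substitute: the cut analysis you give is right, a flow of value $m$ saturates every source arc and hence yields a genuine support matrix with $s_j(A)\le\alpha^*$, and the feasibility of that flow reduces exactly to the maximality of $\alpha^*$. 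Of course both proofs are ultimately instances of LP duality, but yours avoids importing the external LP lemma and the hypergraph-duality step, and it is constructive via any max-flow solver. What the paper's Farkas argument buys in exchange is the sharper Proposition on exact column sums in the case where $H$ itself is the densest subgraph, which feeds the block-structure and spectral-decomposition results later --- though your network also delivers this with one extra line: when $\alpha(H)=\alpha^*$ the identity $\sum_j s_j = m = \alpha^* n$ combined with $s_j\le\alpha^*$ forces $s_j=\alpha^*$ for every $j$. One small caveat: the theorem as stated in the body of the paper is for weighted hypergraphs; your network handles this by giving the arc $s\to e_i$ capacity $u_i$, the arc $v_j\to t$ capacity $\alpha^* w_j$, and setting $a_{ij}$ to be the flow on $e_i\to v_j$ divided by $w_j$, after which the same cut computation goes through with $|E_s|$, $|V_s|$ replaced by $\wt(E_s)$, $\wt(V_s)$.
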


It is easy to see that $H$ contains a unique maximal subgraph $H_{\alpha_1} = (V_{\alpha_1}, E_{\alpha_1})$ of maximum density $\alpha_1$. Consider the quotient hypergraph $H / H_{\alpha_1}$ (obtained from $H$ by removing all hyperedges of $H_{\alpha_1}$ and deleting all vertices of $H_{\alpha_1}$ from supports of the remaining hyperedges). Take $H_{\alpha_2} = (V_{\alpha_2}, E_{\alpha_2})$ to be the maximal subgraph in  $H / H_{\alpha_1}$ of maximum density $\alpha_2$. Iterating this process, we construct the spectral decomposition of $H$, which is a partitioning of the sets of vertices and hyperedges of $H$
$$V = V_{\alpha_1} \, \dot\cup \ldots \dot\cup \, V_{\alpha_k}, \quad
E = E_{\alpha_1} \, \dot\cup \ldots \dot\cup \, E_{\alpha_k},$$
yielding a chain of subgraphs in $H$:
$$H_{\alpha_1} \, \subset \, H_{\alpha_1} \cup H_{\alpha_2} 
\, \subset \, H_{\alpha_1} \cup H_{\alpha_2} \cup H_{\alpha_3} \, \subset \ldots \subset \, H.$$
 We show that there exists a support matrix $A$ with the property that for each vertex 
 $v_j \in V_{\alpha_r}$ we have $s_j (A) = \alpha_r$. 
 Thus the spectral decomposition of $H$ may be extracted from this optimal support matrix.
 Such an optimal support matrix necessarily has a block structure with $a_{ij} \neq 0$ only when 
 $v_j \in V_{\alpha_r}$,  $e_i \in E_{\alpha_r}$ for some $r$. 
 
 A hypergraph $H = (V, E)$ has a dual hypergraph $H^*$ with $E$ as the set of vertices of $H^*$ and $V$ as the set of hyperedges, with $e \in \supp_{H^*} (v)$ whenever $v \in \supp_H (e)$.
 We prove that the spectral decomposition of $H^*$ is the dual of the spectral decomposition of $H$.
 \begin{theorem}
 Let $\{ (V_{\alpha_1}, E_{\alpha_1}), (V_{\alpha_2}, E_{\alpha_2}),\ldots,(V_{\alpha_k}, E_{\alpha_k}) \}$ be the spectral decomposition of hypergraph $H$ with factors of densities
 $\alpha_1 > \alpha_2 > \ldots > \alpha_k > 0$. Then
 $\{ (E_{\alpha_k}, V_{\alpha_k}), \ldots, (E_{\alpha_2}, V_{\alpha_2}),(E_{\alpha_1}, V_{\alpha_1}) \}$ is the spectral decomposition of the dual hypergraph $H^*$ with factors of densities $\alpha_k^{-1} > \ldots >  \alpha_2^{-1} > \alpha_1^{-1} > 0$.
 \end{theorem}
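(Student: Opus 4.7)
The plan is to produce an optimal support matrix for $H^*$ by transposing and rescaling an optimal support matrix for $H$, then read off the spectral decomposition of $H^*$ from it. Start with the support matrix $A \in \CS(H)$ guaranteed by the discussion preceding the theorem: $A$ is optimal, satisfies $s_j(A) = \alpha_{r(j)}$ where $r(j)$ is the index such that $v_j \in V_{\alpha_{r(j)}}$, and has the block structure $a_{ij} \neq 0$ only when $v_j$ and $e_i$ sit in the same block. Define an $n \times m$ matrix $B$, indexed so that rows correspond to the hyperedges $V$ of $H^*$ and columns to the vertices $E$ of $H^*$, by $b_{ji} = a_{ij}/\alpha_{r(j)}$.

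The first step is to check that $B \in \CS(H^*)$. Non-negativity and the support condition ($b_{ji} = 0$ whenever $e_i \notin \supp_{H^*}(v_j)$, equivalently $v_j \notin \supp_H(e_i)$) are immediate from the analogous properties of $A$, and the row sums satisfy $\sum_i b_{ji} = s_j(A)/\alpha_{r(j)} = 1$ by construction. Next, computing the column sums of $B$ uses the block structure of $A$: for $e_i \in E_{\alpha_s}$, every nonzero $a_{ij}$ in row $i$ has $r(j) = s$, so $s_{e_i}(B) = (1/\alpha_s)\sum_j a_{ij} = 1/\alpha_s$. Hence $s_{\max}(B) = \alpha_k^{-1}$ and $B$ has the dual block structure. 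Theorem~\ref{smax} applied to $H^*$ then bounds the densest subgraph density of $H^*$ by $\alpha_k^{-1}$.

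To exhibit an extremal subgraph matching this bound, I would show that $(E_{\alpha_k},V_{\alpha_k})$ is a valid subgraph of $H^*$ of density $|V_{\alpha_k}|/|E_{\alpha_k}| = \alpha_k^{-1}$. The key sublemma, established by induction on $s$ from the quotient construction of the spectral decomposition, is that $\supp_H(e_i) \subseteq V_{\alpha_1} \cup \cdots \cup V_{\alpha_s}$ for $e_i \in E_{\alpha_s}$; dualizing gives $\supp_{H^*}(v_j) \subseteq E_{\alpha_{r(j)}} \cup \cdots \cup E_{\alpha_k}$, which for $v_j \in V_{\alpha_k}$ collapses to $E_{\alpha_k}$. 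Maximality among densest subgraphs then follows from the characterization of the vertex set of the maximal densest subgraph of $H^*$ as $\{e_i : s_{e_i}(B) = s_{\max}(B)\} = E_{\alpha_k}$.

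To obtain the full spectral decomposition of $H^*$, I would iterate. The quotient $H^*/(E_{\alpha_k},V_{\alpha_k})$ is naturally identified with the dual of the subhypergraph of $H$ on vertex set $V_{\alpha_1} \cup \cdots \cup V_{\alpha_{k-1}}$ with hyperedges $E_{\alpha_1} \cup \cdots \cup E_{\alpha_{k-1}}$, and the restriction of $B$ to the corresponding rows and columns is, by the block structure, an optimal support matrix for this reduced problem witnessing the next factor $(E_{\alpha_{k-1}},V_{\alpha_{k-1}})$ of density $\alpha_{k-1}^{-1}$. Iterating yields the claimed reversed spectral decomposition of $H^*$. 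The main obstacle I anticipate is the sublemma relating the support of a hyperedge to the block indices, together with the compatibility of the quotient construction with duality; both depend on the paper's precise convention for what counts as a subhypergraph. Once these are nailed down, the remaining verifications are direct algebra.
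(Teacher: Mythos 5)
Your proposal is correct and follows essentially the same route as the paper: both construct the dual support matrix $B$ by transposing the optimal block--structured $A$ and rescaling by $\alpha_r^{-1}$, verify that $B\in\CS(H^*)$ with column sums $\alpha_r^{-1}$ on $E_{\alpha_r}$, and then read the spectral decomposition of $H^*$ off $B$. The paper merely packages your final iteration-through-quotients as a single recovery statement (Proposition~\ref{recover}: a support matrix whose column sums match a chain of subgraphs determines the spectral decomposition, proved by induction on $k$ exactly as in your quotient step), and works with general weights, where the rescaling becomes $b_{ji}=\alpha_r^{-1}u_i^{-1}a_{ij}w_j$.
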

 
 The spectral decomposition of a hypergraph is a significant refinement of its Dulmage-Mendelsohn decomposition \cite{DM} (see Section \ref{dulm} for the definition).
 
\begin{theorem}
Let $(H^+, H^0, H^-)$ be the Dulmage-Mendelsohn decomposition of hypergraph $H$. Then
$$H^+ = \mathop\cup\limits_{\alpha > 1} H_\alpha, \quad
H^0 = H_1, \quad 
H^- = \mathop\cup\limits_{\alpha < 1} H_\alpha.$$
\end{theorem}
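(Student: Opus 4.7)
The strategy is to compare the spectral decomposition block-by-block with the Dulmage--Mendelsohn blocks, using the support-matrix/matching dictionary that underlies both constructions. First I recall from Section \ref{dulm} that the DM decomposition of the bipartite incidence graph of $H$ satisfies the defect identities $|E^+|>|V^+|$, $|E^0|=|V^0|$, $|V^-|>|E^-|$ whenever the respective blocks are nonempty, so that $\alpha(H^+)>1$, $\alpha(H^0)=1$, $\alpha(H^-)<1$; in particular $H^0$ admits a perfect matching between $E^0$ and $V^0$, whereas $H^+$ has unmatched hyperedges and $H^-$ has unmatched vertices in every maximum matching of $H$.

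Next I would prove the two ``easy'' inclusions $\bigcup_{\alpha>1} H_\alpha \subseteq H^+$ and $H_1 \subseteq H^0$. For a spectral factor $H_\alpha$ with $\alpha>1$, Theorem \ref{smax} applied internally to $H_\alpha$ produces a support matrix with all column sums equal to $\alpha$, forcing $|E_\alpha|=\alpha|V_\alpha|>|V_\alpha|$; a Birkhoff-type decomposition of the $\alpha$-block, combined with an alternating-cycle exchange, certifies every vertex and hyperedge of $H_\alpha$ as reachable from an unmatched hyperedge by an alternating path, and so places it in $H^+$. For $H_1$ the corresponding block of the global optimal support matrix is doubly stochastic; Birkhoff then supplies a perfect matching between $E_1$ and $V_1$ which, concatenated with the partial matchings coming from the other blocks, yields a global maximum matching of $H$ saturating $E_1 \cup V_1$, whence $H_1 \subseteq H^0$.

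For the converse direction I would restrict an optimal support matrix of $H$ to the incidence graph of $H^+$; since $\alpha(H^+)>1$ and the spectral decomposition is maximal with respect to density among all subhypergraphs, this restriction forces $H^+ \subseteq \bigcup_{\alpha>1} H_\alpha$. Combining with the opposite inclusion gives $H^+=\bigcup_{\alpha>1}H_\alpha$, and the same argument gives $H^0=H_1$. The remaining identity $H^- = \bigcup_{\alpha<1}H_\alpha$ follows either by cardinality (both decompositions exhaust $V\cup E$ disjointly) or, more conceptually, by applying what has just been proved to the dual hypergraph $H^*$ and invoking the duality theorem for spectral decompositions: the DM block $H^-$ of $H$ corresponds to $H^+$ of $H^*$, while $\alpha<1$ in $H$ corresponds to $\alpha^{-1}>1$ in $H^*$.

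The main obstacle is the alternating-path step in the middle paragraph: converting the fractional column-sum data of the support matrix into combinatorial alternating paths that witness membership in $H^+$. This is the bridge between the linear-algebraic machinery developed in this paper (support matrices, spectral norms) and the classical matching-theoretic side (augmenting paths, K\"onig's theorem) underlying the DM decomposition, and it is also what rules out any ``leakage'' of a hyperedge or vertex between spectral blocks of different densities into a common DM block.
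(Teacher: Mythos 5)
Your proof works with the matching-theoretic description of the Dulmage--Mendelsohn decomposition (maximum matchings, alternating paths from unmatched hyperedges, defect identities), but the paper defines the decomposition purely in terms of minimal exterior covers: $V^\pm$ and $E^\pm$ are intersections over all minimal exterior covers $(\hV,\hE)$. The equivalence of the two descriptions is a K\"onig-type duality that is nowhere established in the paper, and you yourself flag the bridge --- turning fractional column-sum data into alternating paths --- as ``the main obstacle'' without closing it; the DM facts you invoke as known ($\alpha(H^+)>1$, $H^0$ perfectly matched, unmatched hyperedges in $H^+$) are precisely what must be proved under the paper's definition. There is also a concrete error in your converse step: knowing $\alpha(H^+)>1$ does not force $H^+\subseteq\bigcup_{\alpha>1}H_\alpha$, because a subgraph of density greater than $1$ can contain vertices from sparse spectral blocks (a dense piece glued to a sparse piece still averages above $1$). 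Ruling out such leakage requires blockwise control, which is exactly the alternating-path machinery you have not supplied.

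The paper's own argument avoids matchings entirely. It exhibits two explicit exterior covers built from the spectral decomposition, namely $(\cup_{\alpha>1}V_\alpha,\ \cup_{\alpha\le 1}E_\alpha)$ and $(\cup_{\alpha\ge 1}V_\alpha,\ \cup_{\alpha<1}E_\alpha)$, and then shows by a short counting argument --- using only that subgraphs of $H_\alpha$ have density at most $\alpha$, together with duality for $\alpha\le 1$ --- that any exterior cover $(\hV,\hE)$ restricted to a spectral block satisfies $|V_\alpha|\le|\hV_\alpha|+|\hE_\alpha|$ for $\alpha\ge 1$ and $|E_\alpha|\le|\hV_\alpha|+|\hE_\alpha|$ for $\alpha\le 1$. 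Summing over blocks proves the two exhibited covers are minimal, and tracing where the inequalities must be equalities forces $\hV_\alpha=V_\alpha$, $\hE_\alpha=\varnothing$ for $\alpha>1$ and dually for $\alpha<1$ in \emph{every} minimal cover; the theorem is then read off directly from the definition of $V^\pm,E^\pm$ as intersections over minimal covers. To salvage your route you would first have to prove the equivalence between minimal exterior covers and maximum matchings of the incidence bipartite graph, which is considerably more work than this counting argument.
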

 
Let $H$ be a hypergraph with density $\alpha$ such that no subgraph in $H$ has density exceeding $\alpha$. Then optimal support matrices have row sums equal to 1 and column sums equal to $\alpha$. Thus optimal support matrices may be viewed as the analogues of doubly stochastic matrices. It is well-known that doubly stochastic matrices form a simplex whose vertices are permutation matrices. Optimal support matrices also form a simplex.

\begin{question}
What are the vertices of the simplex of optimal support matrices?
\end{question}

Based on Theorem \ref{smax} we developed a simple efficient parallelizable algorithm for finding a maximum density subgraph (and more generally the spectral decomposition) for a given hypergraph. This algorithm is an iterative procedure applied to a support matrix $A$ which gradually minimizes $s_{\max}(A)$. The block structure in $A$ corresponding to the spectral decomposition will emerge even before the support matrix becomes optimal. Hence the maximum density subgraph may be extracted from a nearly optimal support matrix. At the same time a given support matrix provides an upper bound on densities of subgraphs. As a result, not only can we extract a high density subgraph from the support matrix, but the support matrix may be used to demonstrate that found subgraph is the best possible. Indeed, since possible values of densities are rational numbers with denominators bounded by the number of vertices, it is sufficient to construct a subgraph with density $\alpha$ and a support matrix $A$ such that $s_{\max}(A)$ is lower than the next rational number exceeding $\alpha$ with an admissible denominator. This algorithm allows us to find a subgraph which provably has the maximum density in a hypergraph with millions of hyperedges.

In this paper we work in the setting of weighted hypergraphs, where we assign positive weights to each vertex and each hyperedge. For simplicity of exposition, we presented a non-weighted version of our results in this Introduction.  

The structure of the paper is as follows. In Section \ref{hyper} we give our main definitions and state a key theorem about the maximum density subgraphs. In Section \ref{mproof} we give the proof of this key result. In Section \ref{spect} we introduce the spectral decomposition of a hypergraph. We present a relationship between spectral decompositions of a hypergraph and its dual in Section \ref{dualty} and show in Section \ref{dulm} that the spectral decomposition is a significant refinement of the Dulmage-Mendelsohn decomposition. We conclude the paper with the description of an efficient algorithm for finding the maximum density subgraph, and more generally, the spectral decomposition for a given hypergraph.
\subsection*{Acknowledgements}
This research is supported in part with a grant from the
Natural Sciences and Engineering Research Council of Canada. 

\section{Weighted hypergraphs and their support matrices}
\label{hyper}

A hypergraph is a triple $H = (V, E, \supp)$, where $V = \{ v_j \, | \, j=1, \ldots, n \}$  is the set 
of vertices, $E = \{ e_i \, | \, i=1, \ldots, m \}$ is the set of hyperedges, and the support function to the power set of $V$, $\supp: \, E \rightarrow \PS(V)$ indicates which vertices belong to a given hyperedge. In a weighted hypergraph we assign weights  $\wt(v_j) = w_j > 0, \wt(e_i) = u_i > 0$ to each vertex and hyperedge.

For subsets $V^\prime \subset V$, $E^\prime \subset E$ we define their weights as 
$$\wt(V^\prime) = \sum\limits_{v \in V^\prime} \wt(v), \ \ 
\wt(E^\prime) = \sum\limits_{e \in E^\prime} \wt(e).$$ 

 The dual hypergraph $H^*$ is a triple $(V^*, E^*, \supp^*)$ where $V^* = E$ and $E^* = V$. The support in $H^*$ of a hyperedge $e^* = v$ is defined as the set of hyperedges of $H$ that contain vertex $v$, that is, $v \in \supp (e)$ if and only if $e \in \supp^* (v)$. Weights $w_j$ of vertices in $H$ become weights of hyperedges in $H^*$ and vice versa.

 We define a {\it subhypergraph} $H^\prime \subset H$ as a pair of subsets $(V^\prime, E^\prime)$ where $V^\prime \subset V$, 
$E^\prime \subset E$ such that $\supp(e) \subset V^\prime$ for every hyperedge $e \in E^\prime$.
The weights of $H^\prime$ are inherited from $H$.

 
  We define a {\it quotient hypergraph} $H^{\prime\prime}$ as a pair of subsets $(V^{\prime\prime}, E^{\prime\prime})$ such that for every $v \in V^{\prime\prime}$ and every 
  hyperedge $e \in E$ containing $v$, we have $e \in E^{\prime\prime}$. The hypergraph structure on $H^{\prime\prime}$ is defined by $\supp_{H^{\prime\prime}} (e) = \supp_H (e) \cap V^{\prime\prime}$.
  
  A quotient of a subgraph in $H$ will be called a factor of $H$.
  
  The following Lemma is elementary, and we omit its proof:
\begin{lemma}
(a) $H^\prime = (V^\prime, E^\prime)$ is a subgraph in $H$ if and only if $H^{\prime\prime} 
= (V \backslash V^\prime, E \backslash  E^\prime)$ is a quotient hypergraph of $H$.

\noindent
(b) $H^\prime = (V^\prime, E^\prime)$ is a subgraph in $H$ if and only if 
$(E^\prime, V^\prime)$ is a quotient hypergraph of $H^*$.
\end{lemma}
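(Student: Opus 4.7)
The plan is to prove both parts by unfolding the definitions of subgraph and quotient and observing that each asserts precisely the contrapositive of the other once the complementary sets are substituted. No nontrivial obstacle is expected; the only care required is bookkeeping the role-swap between vertices and hyperedges when passing to the dual in part (b).

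For part (a), recall that the subgraph condition on $(V^\prime, E^\prime)$ reads: for every $e \in E^\prime$, $\supp(e) \subset V^\prime$, i.e., $v \in \supp(e)$ and $e \in E^\prime$ imply $v \in V^\prime$. Contrapositively, $v \notin V^\prime$ and $v \in \supp(e)$ imply $e \notin E^\prime$. Writing $V^{\prime\prime} = V \setminus V^\prime$ and $E^{\prime\prime} = E \setminus E^\prime$, this reads: for every $v \in V^{\prime\prime}$ and every $e \in E$ with $v \in \supp(e)$, $e \in E^{\prime\prime}$, which is exactly the quotient condition on $(V^{\prime\prime}, E^{\prime\prime})$. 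The reverse implication is the same equivalence read backwards.

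For part (b), I would first translate the quotient condition on $(E^\prime, V^\prime)$ in $H^*$ into the language of $H$. In $H^*$ the vertex set is $E$ and the hyperedge set is $V$, with $\supp_{H^*}(v) = \{e \in E : v \in \supp_H(e)\}$. The quotient condition thus reads: for every $e \in E^\prime$ (a vertex of $H^*$) and every $v \in V$ (a hyperedge of $H^*$) with $e \in \supp_{H^*}(v)$, we have $v \in V^\prime$. Substituting the definition of $\supp_{H^*}$, this becomes: for every $e \in E^\prime$ and every $v \in \supp_H(e)$, $v \in V^\prime$. This is precisely $\supp_H(e) \subset V^\prime$ for all $e \in E^\prime$, i.e., the statement that $(V^\prime, E^\prime)$ is a subgraph of $H$. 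Since every step is an equivalence, both directions follow simultaneously.

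Both parts therefore reduce to a single logical manipulation. The only thing worth flagging explicitly in the write-up is that in part (b) the pair is \emph{listed} in the order $(E^\prime, V^\prime)$ because in $H^*$ the set $E^\prime$ plays the role of vertices and $V^\prime$ the role of hyperedges; once this convention is stated, the argument is a one-line unpacking.
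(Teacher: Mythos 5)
Your proof is correct: both parts follow by unfolding the definitions of subgraph and quotient (taking a contrapositive in part (a) and translating $\supp_{H^*}$ back to $\supp_H$ in part (b)), which is precisely the elementary argument the paper has in mind when it omits the proof.
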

  
\begin{definition} 
The density of a weighted hypergraph is defined as 
$$\alpha(H) = \frac{ \sum\limits_{i=1}^m u_i}{\sum\limits_{j=1}^n w_j} = \frac{\wt(E)}{\wt(V)}.$$
\end{definition}

\begin{lemma} 
\label{density}
 Let $H$ be a weighted hypergraph.
 
 \noindent
(a) The dual hypergraph has density 
$\alpha(H^*) = \alpha(H)^{-1}.$


\noindent 
(b) Let $H^\prime = (V^\prime, E^\prime)$ be a proper subgraph in $H$ with the quotient
$H^{\prime\prime} = (V^{\prime\prime} = V \backslash V^\prime, \, E^{\prime\prime} = E \backslash E^\prime)$. The density of $H$ is a weighted average of the densities of $H^\prime$ and $H^{\prime\prime}$:
$$\alpha(H) = \frac{\wt(V^\prime)}{\wt(V^\prime) + \wt(V^{\prime\prime})} \alpha(H^\prime)
+ \frac{\wt(V^{\prime\prime})}{\wt(V^\prime) + \wt(V^{\prime\prime})} \alpha(H^{\prime\prime}).$$

\noindent
(c) Suppose $H$ has no subgraphs $H^\prime \subset H$ with $\alpha(H^\prime) > \alpha(H)$.
Then its dual hypergraph has no subgraphs $\widetilde{H} \subset H^*$ with density 
$\alpha(\widetilde{H}) > \alpha(H)^{-1}.$
\end{lemma}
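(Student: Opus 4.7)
The overall plan is to dispense with parts (a) and (b) via direct computation from the definitions, and to reduce part (c) to part (b) by invoking the elementary duality lemma that immediately precedes the statement.

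For (a), I would simply unpack the definition of $H^*$: its vertex set is $E$ with weights $u_i$ and its edge set is $V$ with weights $w_j$, so $\alpha(H^*) = \wt(E^*)/\wt(V^*) = \wt(V)/\wt(E) = \alpha(H)^{-1}$. For (b), the hypothesis gives $V = V' \, \dot\cup \, V''$ and $E = E' \, \dot\cup \, E''$, hence $\wt(V) = \wt(V') + \wt(V'')$ and $\wt(E) = \wt(E') + \wt(E'')$. Substituting $\wt(E') = \wt(V') \alpha(H')$ and $\wt(E'') = \wt(V'') \alpha(H'')$ into the definition $\alpha(H) = \wt(E)/\wt(V)$ and factoring the resulting fraction produces the stated weighted average.

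For (c), take any subgraph $\widetilde{H} = (\widetilde{V}, \widetilde{E})$ of $H^*$, with $\widetilde{V} \subset E$ and $\widetilde{E} \subset V$; I may assume $\widetilde{E} \neq \emptyset$, since otherwise $\alpha(\widetilde{H}) = 0$ and the conclusion is trivial. Applying part (b) of the preceding lemma with $H^*$ in place of $H$ shows that $(\widetilde{E}, \widetilde{V})$ is a quotient hypergraph of $(H^*)^* = H$; part (a) of that lemma then identifies this quotient as the complement of the subgraph $H' = (V \setminus \widetilde{E}, E \setminus \widetilde{V})$ of $H$. Setting $H'' = (\widetilde{E}, \widetilde{V})$ and applying part (b) of the current lemma, $\alpha(H)$ is a convex combination of $\alpha(H')$ and $\alpha(H'')$; the hypothesis $\alpha(H') \leq \alpha(H)$ therefore forces $\alpha(H'') \geq \alpha(H)$.

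It then remains to verify $\alpha(H'') = \alpha(\widetilde{H})^{-1}$, which is the same weight-swap identity that underlies part (a): as a subset of $E$ (edges of $H$) and as a subset of $V^*$ (vertices of $H^*$), $\widetilde{V}$ carries the same total weight, and similarly for $\widetilde{E}$, so $\alpha(H'') = \wt(\widetilde{V})/\wt(\widetilde{E}) = \alpha(\widetilde{H})^{-1}$. Combining this with $\alpha(H'') \geq \alpha(H)$ yields $\alpha(\widetilde{H}) \leq \alpha(H)^{-1}$, as required. The only real obstacle in the argument is keeping straight the two interpretations of each subset — once as a set of edges of $H$ and once as a set of vertices of $H^*$; once that bookkeeping is in place, the inequality is a two-line consequence of parts (a) and (b).
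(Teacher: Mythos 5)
Your argument is correct and is essentially the paper's own proof of part (c), merely phrased contrapositively instead of by contradiction: the paper likewise converts the subgraph $\widetilde H\subset H^*$ into a quotient of $H$ of reciprocal density and applies the weighted-average identity of part (b) to its complementary subgraph. Parts (a) and (b) are dismissed as obvious in the paper, and your direct computations supply exactly what is meant.
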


\begin{proof} 
The claims of part (a) and (b) are obvious. Let us prove part (c) by contradiction. 
If a subgraph in $H^*$ has density greater than $\alpha(H)^{-1}$ then $H$ has a quotient hypergraph with density less than $\alpha(H)$. But by part (b) the complimentary subgraph in $H$ will have density exceeding $\alpha(H)$, which is a contradiction.
\end{proof}

 The goal of this paper is solving the following

\noindent
{\bf Densest Subgraph Problem:} In a given weighted hypergraph $H$ find a subgraph $H^\prime$ maximizing the density $\alpha(H^\prime)$.

For technical reasons from now on we will assume that each vertex belongs to some hyperedge, and  each hyperedge has a non-empty support.

 Our main tool for solving the above problem is the notion of a {\it support matrix} $A$.

\begin{definition} 
For a weighted hypergraph $H$ a support matrix $A$ is an $m \times n$ real matrix with
the following properties:

(1) $a_{ij} \geq 0$ for all $i = 1, \ldots, m$, $j=1,\ldots,n$.

(2) If vertex $v_j$ does not belong to hyperedge $e_i$ then $a_{ij} = 0$.

(3) $\sum\limits_{j=1}^n w_j a_{ij} = u_i$ \ for all $i = 1, \ldots,m$.
\end{definition}

 We denote by $\CS(H)$ the set of all support matrices of $H$. We will be solving the problem of finding the optimal
subgraph in $H$ by running an optimization procedure in the set $\CS(H)$. 

 We point out that a support matrix tells us something about densities of (hidden) subgraphs in $H$. Given a support matrix $A$, let us denote by $s_j$ a sum of entries
in column $j$:
$$s_j = \sum\limits_{i=1}^m a_{ij}.$$
Set $s_{\max}(A)$ to be the maximum of the column sums:
$$s_{\max} (A) = \max\limits_{j=1,\ldots,n} s_j .$$

\begin{proposition}
\label{ineq}
 Let $H$ be a weighted hypergraph, and let $A$ be any of its support matrices. Then the density of any subgraph $H^\prime \subset H$ does not exceed $s_{\max} (A)$:
$$ \alpha(H^\prime) \leq s_{\max} (A) .$$
\end{proposition}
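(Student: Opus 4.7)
The plan is to expand $\wt(E')$ using property (3) of the support matrix, exploit the subgraph condition $\supp(e_i) \subset V'$ for $e_i \in E'$ to restrict the sum to columns indexed by $V'$, and then swap the order of summation and bound by $s_{\max}(A)$.

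In detail, first I would write, for each $e_i \in E'$,
\[
u_i \;=\; \sum_{j=1}^n w_j a_{ij} \;=\; \sum_{v_j \in V'} w_j a_{ij},
\]
using the fact that if $v_j \notin V'$, then either $v_j \notin \supp(e_i)$ (since $\supp(e_i) \subset V'$ by the subgraph definition), which forces $a_{ij} = 0$ by property (2). Summing over $e_i \in E'$ then gives
\[
\wt(E') \;=\; \sum_{e_i \in E'} u_i \;=\; \sum_{v_j \in V'} w_j \sum_{e_i \in E'} a_{ij}.
\]

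Next I would bound the inner sum. Since all entries of $A$ are non-negative by property (1),
\[
\sum_{e_i \in E'} a_{ij} \;\leq\; \sum_{i=1}^m a_{ij} \;=\; s_j \;\leq\; s_{\max}(A).
\]
Substituting back yields $\wt(E') \leq s_{\max}(A) \sum_{v_j \in V'} w_j = s_{\max}(A) \cdot \wt(V')$, and dividing by $\wt(V') > 0$ produces the desired inequality $\alpha(H') \leq s_{\max}(A)$.

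There is no real obstacle here; the argument is a two-line computation whose only subtlety is making sure the right index set is used when invoking the subgraph condition. I would just be careful to state explicitly why the restriction of the sum over $j$ to $V'$ is legitimate, as this is exactly where the definition of a subgraph (hyperedges supported inside $V'$) interacts with property (2) of the support matrix.
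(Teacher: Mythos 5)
Your proof is correct and follows essentially the same route as the paper: both compute the weighted sum of the entries of $A$ over the block indexed by $E^\prime \times V^\prime$ in two ways, using property (3) together with the subgraph condition to identify it with $\wt(E^\prime)$, and then bound the partial column sums by $s_j \leq s_{\max}(A)$ via non-negativity. The only difference is presentational (you chain the steps into a single computation rather than two displayed sums combined at the end), so no further comment is needed.
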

\begin{proof} 
Let $H^\prime = (V^\prime, E^\prime)$ be a subgraph in $H$. Let us compute the weighted sum of the entries of $A$ in rows corresponding to $E^\prime$:
$$\sum\limits_{i \in E^\prime} \sum_{j=1}^n w_j a_{ij} = \sum\limits_{i \in E^\prime} \sum_{j \in V^\prime} w_j a_{ij} = \sum\limits_{i \in E^\prime} u_i .$$

Let us also compute a weighted sum in columns corresponding to $V^\prime$:
$$\sum\limits_{j \in V^\prime} w_j s_j  = \sum\limits_{j \in V^\prime} \sum\limits_{i=1}^m w_j a_{ij} 
\geq \sum\limits_{j \in V^\prime} \sum\limits_{i \in E^\prime} w_j a_{ij} = \sum\limits_{i \in E^\prime} u_i .$$ 
Then we get
$$s_{\max} (A) = \frac{\sum\limits_{j\in V^\prime} w_j s_{\max} (A)}{\sum\limits_{j\in V^\prime} w_j}
\geq \frac{\sum\limits_{j\in V^\prime} w_j s_j}{\sum\limits_{j\in V^\prime} w_j} 
\geq \frac{\sum\limits_{i \in E^\prime} u_i}{\sum\limits_{j\in V^\prime} w_j} = \alpha(H^\prime).$$
\end{proof}

Let us state our main result:

\begin{theorem}
\label{main}
Let $H$ be a weighted hypergraph. Then 
$$\max\limits_{H^\prime \subset H} \alpha(H^\prime) = \min\limits_{A \in \CS(H)} s_{\max} (A).$$
\end{theorem}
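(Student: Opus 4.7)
Proposition \ref{ineq} already supplies the weak inequality $\max_{H' \subset H} \alpha(H') \leq \min_{A \in \CS(H)} s_{\max}(A)$, so the plan reduces to constructing a subgraph whose density attains $\alpha^* := \min_{A \in \CS(H)} s_{\max}(A)$. First I would note that $\CS(H)$ is a compact convex polytope: the row-sum constraint forces $a_{ij} \leq u_i / w_j$, so every support matrix lies in a bounded box cut out by the affine equalities and the sign constraints. The functional $s_{\max}$ is continuous and convex on $\CS(H)$, so $\alpha^*$ is attained and the set $\M^* \subset \CS(H)$ of minimizers is itself compact and convex.

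Next I would perform a second-stage optimization inside $\M^*$: choose $A^* \in \M^*$ minimizing the strictly convex auxiliary objective $F(A) = \sum_{j=1}^n w_j \, s_j(A)^2$, and from it define
\[
V' = \{ v_j : s_j(A^*) = \alpha^* \}, \qquad E' = \{ e_i : \supp(e_i) \subset V' \},
\]
which is a bona fide subgraph $H' = (V', E')$ of $H$. Weighting the column sums of $A^*$ over $V'$ gives
\[
\alpha^* \wt(V') = \sum_{j \in V'} w_j s_j(A^*) = \sum_{i \in E'} \sum_{j \in V'} w_j a_{ij}^* + \sum_{i \notin E'} \sum_{j \in V'} w_j a_{ij}^*,
\]
and because each $e_i \in E'$ has $\supp(e_i) \subset V'$, the first sum on the right collapses to $\sum_{i \in E'} u_i = \wt(E')$. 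Thus the equality $\alpha(H') = \alpha^*$ is equivalent to the vanishing of the remaining sum, i.e.\ to $a_{ij}^* = 0$ whenever $e_i \notin E'$ and $v_j \in V'$.

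This vanishing is the main obstacle, and is where the second-stage optimality of $A^*$ is put to work. If instead $a_{i_0 j_0}^* > 0$ for some $e_{i_0} \notin E'$ and $v_{j_0} \in V'$, then by definition of $E'$ there is some $v_{j_1} \in \supp(e_{i_0})$ with $v_{j_1} \notin V'$, so $s_{j_1}(A^*) < \alpha^*$. The mass transfer $a_{i_0 j_0} \mapsto a_{i_0 j_0} - \varepsilon/w_{j_0}$, $a_{i_0 j_1} \mapsto a_{i_0 j_1} + \varepsilon/w_{j_1}$ preserves the row constraint $\sum_j w_j a_{i_0 j} = u_{i_0}$, and for small $\varepsilon > 0$ the perturbed matrix still lies in $\M^*$, since the only altered column sums are $s_{j_0}$ (which drops strictly below $\alpha^*$) and $s_{j_1}$ (which stays strictly below $\alpha^*$). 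A direct expansion however yields
\[
\Delta F = 2\varepsilon\bigl(s_{j_1}(A^*) - \alpha^*\bigr) + \varepsilon^2 \bigl( 1/w_{j_0} + 1/w_{j_1} \bigr),
\]
which is strictly negative for small $\varepsilon$, contradicting the minimality of $A^*$ for $F$ on $\M^*$. The vanishing claim therefore holds, so $\wt(E') = \alpha^* \wt(V')$, and since $\alpha^* \geq \wt(E)/\wt(V) > 0$ forces $V'$ and hence $E'$ to be nonempty, the subgraph $H' = (V', E')$ attains density exactly $\alpha^*$, closing the inequality.
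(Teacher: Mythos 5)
Your proof is correct, but it takes a genuinely different route from the paper. The paper obtains the hard inequality from Proposition \ref{existence}, which it proves by setting up the existence of a suitable support matrix as a linear feasibility problem and invoking Farkas' Lemma, together with the Hu--Wu--Chan linear-programming characterization of the maximum density (Corollary \ref{constraints}) and the duality $H \leftrightarrow H^*$. You instead argue variationally inside the polytope $\CS(H)$: compactness gives a minimizer of $s_{\max}$, a secondary minimization of $F(A)=\sum_j w_j s_j(A)^2$ over the minimizer set $\M^*$ selects a matrix whose top-level columns $V'$ support a genuine subgraph, and the mass-transfer perturbation (which preserves the row constraint and cannot raise $s_{\max}$ above $\alpha^*$, hence keeps the matrix in $\M^*$ by global minimality) rules out any leakage $a^*_{ij}>0$ with $v_j\in V'$, $e_i\notin E'$. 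Your computation of $\Delta F$ is right and the first-order term dominates, so the exchange argument closes the gap. What your approach buys is self-containedness: no external LP lemma, no Farkas, and it directly exhibits how the densest subgraph is read off an optimal support matrix (which the paper relegates to a remark). What the paper's approach buys is the stronger structural statement of Proposition \ref{existence}(a) --- when $H$ itself is densest there is a support matrix with \emph{all} column sums equal to $\alpha$ --- which is what later drives the block decomposition and the spectral decomposition (Remark \ref{block}, Theorem \ref{colsum}); your argument as written does not immediately yield that refinement, though minimizing $F$ over all of $\CS(H)$ rather than over $\M^*$ would be the natural way to extend it. Two cosmetic points: $F$ is convex but not strictly convex as a function of $A$ (only of the column sums), which is harmless since you only need a minimizer to exist on the compact set $\M^*$; and the membership of the perturbed matrix in $\M^*$ deserves the explicit one-line justification that its $s_{\max}$ is at most $\alpha^*$ and $\alpha^*$ is the global minimum over $\CS(H)$.
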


\section{Proof of the Main Theorem.}
\label{mproof}

It follows from Proposition \ref{ineq} that
$$\max\limits_{H^\prime \subset H} \alpha(H^\prime) \leq \min\limits_{A \in \CS(H)} s_{\max} (A).$$ 
The opposite inequality will follow from

\begin{proposition}
\label{existence}
 (a) Let $H$ be a weighted hypergraph with density $\alpha$ such that no subgraph in $H$ has density exceeding $\alpha$.
Then there exists a support matrix $A$ for $H$ such that every column sum is equal to $\alpha$:
$$s_j (A) = \sum\limits_{i=1}^m a_{ij} = \alpha {\rm \ \ for \ all \ } j=1,\ldots, n.$$

\noindent
(b) Let $H$ be a weighted hypergraph such that no subgraph in $H$ (including $H$ itself) has density exceeding $\alpha$. Then there exists a support matrix $A$ for $H$
such that 
$$s_j (A) = \sum\limits_{i=1}^m a_{ij} \leq \alpha {\rm \ \ for \ all \ } j=1,\ldots, n.$$
\end{proposition}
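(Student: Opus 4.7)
The plan is to establish part (b) first via a max-flow/min-cut argument, and then derive part (a) from (b) by a one-line counting identity.

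For (b), build the following $s$--$t$ flow network $N$: a source $s$, a sink $t$, one intermediate node for each hyperedge $e_i$, and one for each vertex $v_j$. Put arcs $s \to e_i$ with capacity $u_i$; arcs $e_i \to v_j$ with capacity $+\infty$ whenever $v_j \in \supp(e_i)$; and arcs $v_j \to t$ with capacity $\alpha w_j$. Given any $s$--$t$ flow $x$ of value $\wt(E)$, the source arcs are saturated, and setting $a_{ij} = x_{ij}/w_j$ yields a non-negative matrix with $a_{ij} = 0$ off the support pattern, row sums $\sum_j w_j a_{ij} = \sum_j x_{ij} = u_i$ (flow conservation at $e_i$), and column sums $s_j(A) = \sum_i x_{ij}/w_j \leq \alpha w_j / w_j = \alpha$ (capacity on $v_j \to t$). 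Conversely, every support matrix with $s_j(A) \leq \alpha$ produces such a flow. So (b) is equivalent to showing the max flow in $N$ equals $\wt(E)$.

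By max-flow/min-cut, it is enough to show every $s$--$t$ cut has capacity at least $\wt(E)$. Any cut of finite capacity must keep all $v_j \in \supp(e_i)$ on the source side whenever $e_i$ is on the source side (otherwise an infinite arc would be cut). Thus finite cuts correspond bijectively with pairs $(V', E')$ where $V'$ is the set of vertex nodes on the source side, $E'$ is the set of hyperedge nodes on the source side, and $\supp(e_i) \subset V'$ for every $e_i \in E'$. In other words, $H' = (V', E')$ is precisely a subgraph of $H$. The capacity of the associated cut is
$$\sum_{e_i \notin E'} u_i + \alpha \sum_{v_j \in V'} w_j = \wt(E) - \wt(E') + \alpha\, \wt(V'),$$
which is $\geq \wt(E)$ if and only if $\wt(E') \leq \alpha\, \wt(V')$, i.e.\ $\alpha(H') \leq \alpha$. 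This holds for every subgraph by hypothesis, so (b) is proved.

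For (a), apply (b) to obtain a support matrix $A$ with $s_j(A) \leq \alpha$ for all $j$. The density hypothesis gives $\wt(E) = \alpha\, \wt(V)$, so
$$\sum_{j=1}^n w_j\bigl(\alpha - s_j(A)\bigr) \;=\; \alpha\, \wt(V) - \sum_{i,j} w_j a_{ij} \;=\; \alpha\, \wt(V) - \sum_{i=1}^m u_i \;=\; 0.$$
Each summand is non-negative and each $w_j > 0$, so $s_j(A) = \alpha$ for every $j$, as required.

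The main obstacle is the correct identification of finite min-cuts with subgraphs of $H$: it is essential that the arcs $e_i \to v_j$ carry infinite capacity, which is what forces every relevant cut to respect the combinatorial constraint $\supp(e_i) \subset V'$ and thereby translates the min-cut inequality exactly into the density bound $\alpha(H') \leq \alpha$. Once this bridge between network cuts and subgraphs is in place, the rest is routine.
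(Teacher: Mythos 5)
Your proof is correct, but it takes a genuinely different route from the paper. The paper treats the existence of the desired support matrix as a linear-programming feasibility question and invokes Farkas' Lemma: if no such matrix exists, one obtains multipliers $f_i, g_j$ whose sign pattern carves out a subgraph $(V',E')$ of $H$ and a subgraph $(E'',V'')$ of the dual $H^*$, and the density bounds on both (via the Hu--Wu--Chan LP lemma and the duality statement $\alpha(H^*)=\alpha(H)^{-1}$) contradict the Farkas inequality. It proves part (a) first with this full machinery and then observes that (b) is the same argument with $V''=\varnothing$. You instead encode support matrices with $s_j(A)\le\alpha$ as flows of value $\wt(E)$ in an explicit network, identify finite cuts with subgraphs of $H$ via the infinite-capacity arcs, and reduce (b) to the min-cut bound $\wt(E)-\wt(E')+\alpha\,\wt(V')\ge\wt(E)$, which is exactly the density hypothesis; you then deduce (a) from (b) by the counting identity $\sum_j w_j(\alpha-s_j(A))=\alpha\,\wt(V)-\wt(E)=0$. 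Your route is more self-contained and constructive (a max-flow computation actually produces the matrix), avoids the dual hypergraph entirely, and gets (a) essentially for free; it is in effect the Goldberg/Hu--Wu--Chan network that the paper cites for context but deliberately does not use. The paper's Farkas argument, by contrast, stays entirely in the language of support matrices and produces the sign decomposition $(V',E')$, $(V'',E'')$ that foreshadows the block structure exploited later. Both proofs are complete; the only cosmetic point worth adding to yours is the degenerate case $V'=\varnothing$ (forcing $E'=\varnothing$ since hyperedges have nonempty support), where the cut capacity is exactly $\wt(E)$ and the density inequality is vacuous.
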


\begin{remark}  If we succeed to construct an optimal support matrix $A$ for $H$ minimizing $s_{\max}(A)$ then we will be able to identify a subgraph $H^\prime = (V^\prime, E^\prime)$ in $H$ of maximum density constructing the set $V^\prime$ by taking the columns of $A$ satisfying $s_j = s_{\max} (A)$, and defining the set $E^\prime$ by
$$E^\prime = \{ e_i \in E \, | \, \exists v_j \in V^\prime \ \ a_{ij} \neq 0 \}.$$
Thus the optimal subgraph in $H$ may be extracted from an optimal support matrix, and even from an approximation to an optimal support matrix. See Section \ref{algo} for more details.
\end{remark}

Hu, Wu, and Chan showed (\cite{HWC}, Lemma 2.2, see also \cite{C}, Lemma 1)  that a solution of DSP may be obtained from the solution of the following linear programming problem:
\begin{lemma}
Let $H$ be a weighted hypergraph. Consider a linear optimization problem on variables
$p_i \geq 0, \, q_j \geq 0, \ i=1,\ldots,m, \ j=1,\ldots,n$ 
subject to constraints $p_i \leq q_j$ whenever vertex $v_j$ belongs to hyperedge $e_i$ 
and $\sum\limits_{j=1}^n w_j q_j = 1$, with the objective function 
$\sum\limits_{i=1}^m u_i p_i \rightarrow \max$. Then under these constraints
$$\max \sum\limits_{i=1}^m u_i p_i = \max\limits_{H^\prime \subset H} \alpha(H^\prime). $$
\end{lemma}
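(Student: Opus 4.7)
The plan is to establish the two inequalities separately, using the primal LP in one direction and a level-set (layer-cake) argument in the other.

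For the inequality $\max\sum u_i p_i \geq \max_{H'\subset H}\alpha(H')$, I would exhibit a feasible LP point of value $\alpha(H')$ for each subgraph $H' = (V', E')$ with $\wt(V')>0$. Set $c = 1/\wt(V')$, then let $q_j = c$ for $v_j \in V'$ and $q_j = 0$ otherwise, and let $p_i = c$ for $e_i \in E'$ and $p_i = 0$ otherwise. The constraint $\sum_j w_j q_j = c\,\wt(V') = 1$ is satisfied. For $e_i \in E'$, every $v_j \in \supp(e_i)$ lies in $V'$ by the definition of a subgraph, so $q_j = c = p_i$; for $e_i \notin E'$ we have $p_i = 0 \leq q_j$. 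The objective evaluates to $c\,\wt(E') = \wt(E')/\wt(V') = \alpha(H')$.

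For the reverse inequality, let $(p,q)$ be any feasible LP point, write $\alpha^\star = \max_{H'\subset H} \alpha(H')$, and introduce the level sets
\[
V_t = \{ v_j \in V \, | \, q_j \geq t \}, \qquad E_t = \{ e_i \in E \, | \, p_i \geq t \}, \qquad t > 0.
\]
The key observation is that $(V_t, E_t)$ is a subgraph of $H$: if $e_i \in E_t$ and $v_j \in \supp(e_i)$, the constraint $p_i \leq q_j$ gives $q_j \geq p_i \geq t$, hence $v_j \in V_t$. Moreover $E_t$ is empty whenever $V_t$ is empty, because every hyperedge has non-empty support.

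Now apply the layer-cake identities
\[
\sum_{i=1}^m u_i p_i = \int_0^\infty \wt(E_t)\, dt, \qquad
\sum_{j=1}^n w_j q_j = \int_0^\infty \wt(V_t)\, dt = 1.
\]
For every $t>0$ with $V_t$ non-empty, the subgraph inequality $\wt(E_t) \leq \alpha^\star\, \wt(V_t)$ holds; at levels where $V_t$ is empty, $\wt(E_t) = 0$ as well. Integrating gives
\[
\sum_{i=1}^m u_i p_i \leq \alpha^\star \int_0^\infty \wt(V_t)\, dt = \alpha^\star,
\]
which completes the proof.

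The only substantive point is verifying that the level sets truly form a subgraph, which follows directly from the constraint $p_i \leq q_j$; the rest is bookkeeping via the standard layer-cake representation. I would not expect any serious obstacle, although one should take care to either pass to a limit in the integral or approximate by simple functions so that the layer-cake identity is applied rigorously to the finite sums in question.
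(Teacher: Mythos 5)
Your proof is correct and complete. Note that the paper does not actually prove this lemma: it is quoted from Hu--Wu--Chan (Lemma 2.2 there) and Charikar (Lemma 1), so there is no in-paper argument to compare against; your write-up is essentially the standard argument from those references. The easy direction (exhibiting the feasible point $p_i = q_j = 1/\wt(V')$ supported on a subgraph) is exactly right, and the level-set construction $V_t = \{v_j : q_j \geq t\}$, $E_t = \{e_i : p_i \geq t\}$ together with the observation that the constraint $p_i \leq q_j$ forces $(V_t, E_t)$ to be a subgraph is the key idea. Your closing worry about rigor of the layer-cake step is unfounded: since $p_i = \int_0^\infty \mathbf{1}[p_i \geq t]\,dt$ exactly for $p_i \geq 0$ and the integrands $t \mapsto \wt(E_t)$, $t \mapsto \wt(V_t)$ are step functions with finitely many jumps, both identities are exact finite computations and no limiting or approximation argument is needed. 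The only hypotheses you rely on beyond the LP constraints are that every hyperedge has non-empty support (so $V_t = \varnothing$ forces $E_t = \varnothing$) and that the maximum of $\alpha$ is taken over subgraphs with $\wt(V') > 0$; both are standing assumptions in the paper.
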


\begin{corollary}
\label{constraints}
 Let $H$ be a weighted hypergraph such that no subgraph in $H$ (including $H$ itself) has density exceeding $\alpha$.
Let $p_i \geq 0, \ i=1,\ldots,m, \ q_j \geq 0, \ j=1,\ldots,n$ satisfy $p_i \leq q_j$ whenever vertex $v_j$ belongs to hyperedge $e_i$.
Then
$$\sum\limits_{i=1}^m u_i p_i \leq \alpha \sum\limits_{j=1}^n w_j q_j .$$
\end{corollary}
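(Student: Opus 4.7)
The plan is to deduce the corollary from the preceding Lemma by a simple homogeneity (scaling) argument. The Lemma tells us that whenever $(p,q)$ is feasible with the normalization $\sum_j w_j q_j = 1$, we have $\sum_i u_i p_i \le \max_{H'\subset H} \alpha(H')$. By the hypothesis on $H$, this maximum is at most $\alpha$. So the corollary reduces to removing the normalization and treating arbitrary feasible $(p,q)$.

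First I would set $T := \sum_{j=1}^n w_j q_j$ and split on whether $T>0$ or $T=0$. If $T>0$, I would define $\widetilde q_j := q_j/T$ and $\widetilde p_i := p_i/T$. Since the constraints $p_i \le q_j$ (whenever $v_j \in \supp(e_i)$) and $p_i, q_j \ge 0$ are homogeneous of degree one, the pair $(\widetilde p, \widetilde q)$ is feasible and satisfies $\sum_j w_j \widetilde q_j = 1$. Applying the Lemma together with the hypothesis that no subgraph of $H$ has density exceeding $\alpha$ gives $\sum_i u_i \widetilde p_i \le \alpha$, and multiplying by $T$ yields the desired inequality.

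The remaining case $T=0$ is the only slightly delicate point, and it is where the paper's standing assumption that every hyperedge has non-empty support is needed. Indeed, if $T=0$ then, since each $w_j>0$ and $q_j\ge 0$, every $q_j$ vanishes. For each hyperedge $e_i$ there exists some $v_j \in \supp(e_i)$, and the constraint $p_i \le q_j = 0$ together with $p_i \ge 0$ forces $p_i = 0$. Thus both sides of the inequality are zero and it holds trivially.

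I do not expect any real obstacle here; the main thing to get right is to invoke the correct normalization and to note where the non-empty-support hypothesis is used in the degenerate case.
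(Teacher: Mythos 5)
Your proposal is correct and is exactly the deduction the paper intends (the paper states the corollary without proof, and the only reasonable route from the Lemma is the homogeneity/normalization argument you give). Your explicit handling of the degenerate case $\sum_j w_j q_j = 0$, using the standing assumption that every hyperedge has non-empty support, is a small but genuine point of care that the paper leaves implicit.
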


 Now we are ready to proceed with the proof of Proposition \ref{existence}. We will be using the methods of linear programming.
For part (a) we shall view the task of finding a required support matrix $A$ as a linear programming problem
on the set of unknowns $\left\{ a_{ij} \, | \, v_j \in \supp(e_i) \right\}$
with constraints $a_{ij} \geq 0$,
$$ [e_i]: \ \ \sum\limits_{j = 1}^n w_j a_{ij} = u_i, \ \ i=1,\ldots, m,$$
$$[v_j]: \ \ \sum\limits_{i=1}^m a_{ij} = \alpha, \ \ j=1,\ldots,n.$$
If a desired support matrix $A$ does not exist then this linear programming problem is infeasible.

Farkas' Lemma (\cite{Vdb}, Section 10.4) on infeasible linear programming problems states that in this case there is a linear
combination of constraint equations 
$$\sum\limits_{i=1}^m f_i [e_i] + \sum\limits_{j=1}^n g_j [v_j]$$
such that the coefficients of all $a_{ij}$ in it are non-negative, but the right-hand-side is negative. This yields inequalities
\begin{equation}
\label{first}
w_j f_i + g_j \geq 0 \ \ {\rm whenever \ vertex \ } v_j {\rm \ belongs \ to \ hyperedge \ } e_i,
\end{equation}
and 
\begin{equation}
\label{second}
\sum\limits_{i=1}^m f_i u_i + \alpha \sum\limits_{j=1}^n g_j < 0.
\end{equation}

We split $V$ and $E$ into subsets according to the signs of $f_i$ and $g_j$:
$$V^\prime = \left\{ j \in V \, | \, g_j \geq 0 \right\}, \ \ 
V^{\prime\prime} = \left\{ j \in V \, | \, g_j < 0 \right\}.$$
$$E^\prime = \left\{ i \in E \, | \, f_i < 0 \right\}, \ \ 
E^{\prime\prime} = \left\{ i \in E \, | \, f_i \geq 0 \right\},$$

Condition (\ref{first}) implies that for every $i \in E^{\prime}$, $j\in V^{\prime\prime}$ 
vertex $v_j$ does not belong to the hyperedge $e_i$,
while (\ref{first}) holds trivially for $i \in E^{\prime\prime}$, $j\in V^\prime$.

Then we conclude that $(V^\prime, E^{\prime})$ is a subgraph in $H$, while $(E^{\prime\prime}, V^{\prime\prime})$ is a subgraph in $H^*$.

Set $p_i = |f_i|$ and $q_j = \frac{|g_j|}{w_j}$. Then assuming that vertex $v_j$ belongs to hyperedge $e_i$, condition (\ref{first}) becomes
$$q_j \geq p_i \geq 0 \ \ {\rm if} \ i \in E^{\prime}, j \in V^\prime,$$
$$p_i \geq q_j \geq 0 \ \ {\rm if} \ i \in E^{\prime\prime}, j \in V^{\prime\prime},$$
while condition (\ref{second}) becomes
\begin{equation}
\label{third}
\left( \alpha \sum\limits_{j\in V^\prime} q_j w_j - \sum\limits_{i\in E^{\prime}} u_i p_i \right) +
\left( \sum\limits_{i\in E^{\prime\prime}} u_i p_i - \alpha \sum\limits_{j\in V^{\prime\prime}} q_j w_j \right) < 0.
\end{equation}

 Since the density of $(V^\prime, E^{\prime})$ and of its subgraphs is at most $\alpha$, we have by Corollary \ref{constraints}
$$ \sum\limits_{i\in E^{\prime}} u_i p_i \leq \alpha \sum\limits_{j\in V^\prime} q_j w_j.$$

 By Lemma \ref{density}(c), $H^*$ has no subgraphs of density exceeding $\alpha^{-1}$. Applying Corollary \ref{constraints} to $(E^{\prime\prime}, V^{\prime\prime}) \subset H^*$,
we get 
$$\sum\limits_{j\in V^{\prime\prime}} q_j w_j \leq \alpha^{-1} \sum\limits_{i\in E^{\prime\prime}} u_i p_i.$$
Combining the last two inequalities, we get a contradiction to (\ref{third}).

The proof of part (b) of Proposition \ref{existence} goes along the same lines as for part (a). The linear programming problem will be modified to have inequalities in constraints $[v_j]$ instead of the equalities, while constraints $[e_i]$ will stay as equalities. As a result we will get an additional restriction that $g_j \geq 0$, and the set $V^{\prime\prime}$ will become empty. Applying Corollary \ref{constraints}  to subgraph 
$(V^\prime, E^{\prime}) \subset H$ we will get a contradiction to (\ref{second}), which implies that the linear programming problem is 
feasible and the required support matrix $A$ exists. This completes the proof of Proposition \ref{existence} and our Main Theorem \ref{main} follows.

\begin{remark}
\label{block}
Let $H^\prime = (V^\prime, E^\prime)$ be a subgraph in $H$ of density $\alpha$ and let $A$ be a support matrix for $H$ with $s_j = \alpha$ for every vertex $v_j \in V^\prime$. Then it follows from the proof of Proposition \ref{ineq} that $A$ has a block decomposition with blocks $(V^\prime, E^\prime)$ and $(V \backslash V^\prime, E  \backslash E^\prime)$, that is,  we must have $a_{ij} = 0$ when $v_j \in V^\prime$ and $e_i \not\in E^\prime$ or when $v_j \not\in  V^\prime$ and $e_i \in E^\prime$. 
\end{remark}

\section{Spectral decomposition of a hypergraph}
\label{spect}

In this section we will present an eigenvalue interpretation of optimal support matrices and introduce the spectral decomposition of a hypergraph.

We recall that the norm of a real $n \times n$ matrix $B$ is defined as 
$$\| B \| = \sup\limits_{x \in \R^n \backslash \{ 0 \} } \frac{\| Bx\|}{\| x \|}.$$
This is equal to the maximum norm of a complex eigenvalue of $B$. If $B$ is symmetric and non-negative-definite then all eigenvalues of $B$ are real and non-negative, so the norm of $B$ is equal to its largest eigenvalue. In this case, the norm can also be given as
$$\| B \| = \sup\limits_{x \in \R^n \backslash \{ 0 \} } \frac{(x, Bx)}{(x, x)}.$$
Furthermore, if $B$ is factored as $B = Q^T Q$ for some $m \times n$ real matrix $Q$ then
$$\| B \| = \sup\limits_{x \in \R^n \backslash \{ 0 \} } \frac{(Qx, Qx)}{(x, x)}.$$

Consider diagonal $n \times n$ and $m \times m$ matrices of weights $W$ and $U$, where $j$-th entry on the diagonal of $W$ is $w_j$, and $i$-th entry on the diagonal of $U$ is $u_i$.
Let $W^{\frac {1}{2}}$ and $U^{\frac {1}{2}}$ be the diagonal matrices with
the diagonal entries $\sqrt{w_j}$ and $\sqrt{u_i}$ respectively.

For a support matrix $A$ define
$$\tA = U^{-\frac {1}{2}} A W^{\frac {1}{2}}.$$

\begin{theorem}
\label{eigen}
Let $H$ be a weighted hypergraph. Then 
$$\max\limits_{H^\prime \subset H} \alpha(H^\prime) = \min\limits_{A \in \CS(H)} \| \tA^T \tA \|.$$
\end{theorem}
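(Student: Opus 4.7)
The plan is to use Theorem \ref{main} as a black box and squeeze $\min_A \|\tA^T \tA\|$ between $\max_{H'} \alpha(H')$ and $\min_A s_{\max}(A)$ via two independent inequalities: one coming from a Cauchy--Schwarz estimate on the Rayleigh quotient, the other from a carefully chosen test vector supported on the vertices of a maximum density subgraph.

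First I would rewrite the norm via the variational characterization applied to the factorization $\tA^T \tA = (U^{-1/2} A W^{1/2})^T (U^{-1/2} A W^{1/2})$. Substituting $y = W^{1/2} x$, one obtains
$$\|\tA^T \tA\| = \sup_{y\neq 0} \frac{\sum_{i=1}^m \frac{1}{u_i}\bigl(\sum_{j=1}^n a_{ij} y_j\bigr)^2}{\sum_{j=1}^n \frac{y_j^2}{w_j}}.$$
This is the form I will estimate from above and below.

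For the upper bound $\|\tA^T \tA\| \leq s_{\max}(A)$, I would apply Cauchy--Schwarz to the inner sum, splitting $a_{ij} y_j = \sqrt{a_{ij} w_j}\cdot\sqrt{a_{ij}/w_j}\, y_j$ and using the row constraint $\sum_j w_j a_{ij} = u_i$:
$$\Bigl(\sum_{j} a_{ij} y_j\Bigr)^2 \leq \Bigl(\sum_j w_j a_{ij}\Bigr)\Bigl(\sum_j \tfrac{a_{ij}}{w_j} y_j^2\Bigr) = u_i \sum_j \tfrac{a_{ij}}{w_j} y_j^2.$$
Summing with weights $1/u_i$ and swapping sums turns the numerator into $\sum_j (y_j^2/w_j)\, s_j(A)$, which is dominated by $s_{\max}(A) \sum_j y_j^2/w_j$. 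Hence $\min_A \|\tA^T \tA\| \leq \min_A s_{\max}(A) = \max_{H'\subset H} \alpha(H')$ by Theorem \ref{main}.

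For the matching lower bound, I would fix an arbitrary $A \in \CS(H)$ and any subgraph $H' = (V', E') \subset H$, and plug in the test vector $y_j = w_j$ for $v_j \in V'$, $y_j = 0$ otherwise. The denominator is $\sum_{j \in V'} w_j = \wt(V')$. For the numerator, the crucial point is that for every $e_i \in E'$ the support of $e_i$ lies in $V'$, so $\sum_j a_{ij} y_j = \sum_{j \in V'} a_{ij} w_j = \sum_j a_{ij} w_j = u_i$; discarding the nonnegative contributions from $i \notin E'$ gives the numerator $\geq \sum_{i \in E'} u_i = \wt(E')$. Thus $\|\tA^T \tA\| \geq \alpha(H')$ for every $A$ and every $H'$, and taking the maximum over $H'$ and minimum over $A$ yields $\min_A \|\tA^T \tA\| \geq \max_{H'\subset H} \alpha(H')$.

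Combining the two inequalities gives the desired identity. There is no real obstacle to overcome here once Theorem \ref{main} is available: the only step requiring a little care is verifying that the Cauchy--Schwarz estimate is tight enough to recover $s_{\max}(A)$ exactly on the right-hand side, which works precisely because the normalization $\sum_j w_j a_{ij} = u_i$ matches the weighting used in the factorization $\tA = U^{-1/2} A W^{1/2}$.
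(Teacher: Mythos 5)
Your argument is correct, and the interesting half of it differs from the paper's. The lower bound $\alpha(H') \leq \|\tA^T\tA\|$ via the test vector supported on $V'$ is essentially identical to the paper's (your $y_j = w_j$ is exactly the paper's $x_j = \sqrt{w_j}$ after the substitution $y = W^{1/2}x$). For the other direction, however, the paper does not prove a pointwise inequality at all: it constructs a specific block-diagonal support matrix adapted to the spectral decomposition, exhibits eigenvectors $x^{(1)},\ldots,x^{(k)}$ of $\tA^T\tA$ with eigenvalues $\alpha_1,\ldots,\alpha_k$, and invokes the Perron--Frobenius theorem to argue that the dominant eigenvalue of that particular matrix must be $\alpha_1$. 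You instead prove the clean universal inequality $\|\tA^T\tA\| \leq s_{\max}(A)$ for \emph{every} support matrix by Cauchy--Schwarz (the splitting $a_{ij}y_j = \sqrt{a_{ij}w_j}\cdot\sqrt{a_{ij}/w_j}\,y_j$ is exactly right, and the row normalization $\sum_j w_j a_{ij} = u_i$ makes the estimate close up), and then quote Theorem \ref{main}. This yields the sandwich $\alpha(H') \leq \|\tA^T\tA\| \leq s_{\max}(A)$ valid for all $H'$ and all $A$, which is more elementary (no Perron--Frobenius, no spectral decomposition machinery) and arguably more informative as an inequality. What the paper's route buys in exchange is explicit spectral information about the optimal matrix --- that its eigenvalues include all the densities $\alpha_1,\ldots,\alpha_k$ with eigenvectors localized on the factors $V_{\alpha_r}$ --- which feeds into the spectral-decomposition theme of the rest of the paper; your proof establishes the stated identity but not that extra structure.
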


Before we prove this theorem let us introduce the spectral decomposition of a hypergraph. We begin with an elementary Lemma:
\begin{lemma}
\label{boolean}
Let $H$ be a weighted hypergraph with no subgraphs of density exceeding $\alpha$. Let $H_1 = (V_1, E_1)$ and $H_2 = (V_2, E_2)$ be two subgraphs in $H$ of density $\alpha$. Then 
$H_1 \cup H_2 = (V_1 \cup V_2, E_1 \cup E_2)$ and $H_1 \cap H_2 = (V_1 \cap V_2, E_1 \cap E_2)$ also have density $\alpha$.
\end{lemma}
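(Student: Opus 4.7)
The plan is to combine inclusion--exclusion on weights with the density upper bound hypothesis. First I would verify that $H_1 \cup H_2$ and $H_1 \cap H_2$ really are subgraphs of $H$: any hyperedge $e \in E_1 \cup E_2$ lies in $E_1$ or $E_2$, so its support lies in $V_1$ or $V_2$, hence in $V_1 \cup V_2$; and any $e \in E_1 \cap E_2$ has $\supp(e) \subseteq V_1 \cap V_2$. Thus both are legitimate subgraphs to which the density bound applies.

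Next I would use the obvious additivity of the weight function on disjoint unions. Since $\wt$ satisfies $\wt(X) + \wt(Y) = \wt(X \cup Y) + \wt(X \cap Y)$ for any two subsets of $V$ or $E$, and since $\wt(E_r) = \alpha \wt(V_r)$ for $r=1,2$ by hypothesis, summing the two density identities yields
\begin{equation*}
\wt(E_1 \cup E_2) + \wt(E_1 \cap E_2) \;=\; \alpha \bigl( \wt(V_1 \cup V_2) + \wt(V_1 \cap V_2) \bigr).
\end{equation*}

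Now I would invoke the hypothesis that no subgraph of $H$ has density exceeding $\alpha$, applied separately to the union and to the intersection, obtaining $\wt(E_1 \cup E_2) \leq \alpha \wt(V_1 \cup V_2)$ and $\wt(E_1 \cap E_2) \leq \alpha \wt(V_1 \cap V_2)$. Since the sum of these two inequalities is forced to be an equality by the identity above, each of them must individually be an equality, and so both $H_1 \cup H_2$ and $H_1 \cap H_2$ have density exactly $\alpha$.

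The only delicate point, and the main obstacle, is the degenerate case $V_1 \cap V_2 = \emptyset$, when the density of the intersection is formally $0/0$. But under the blanket assumption of the paper that every hyperedge has nonempty support, an edge in $E_1 \cap E_2$ would need $\supp(e) \subseteq V_1 \cap V_2 = \emptyset$, forcing $E_1 \cap E_2 = \emptyset$; the intersection is then the empty hypergraph, which may be treated as having density $\alpha$ by convention (the identity $\wt(E_1 \cup E_2) = \alpha \wt(V_1 \cup V_2)$ still holds directly from the displayed equation). This case does not interfere with the use of $H_1 \cup H_2$, which is what the subsequent spectral decomposition actually requires.
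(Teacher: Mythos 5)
Your proof is correct and follows essentially the same route as the paper: inclusion--exclusion on weights combined with the density upper bound applied to the union and the intersection, with the symmetric identity forcing both inequalities to be equalities. Your explicit check that the union and intersection are subgraphs and your remark on the degenerate case $V_1 \cap V_2 = \emptyset$ are minor points of care the paper omits, but the core argument is identical.
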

\begin{proof}
We have 
\begin{align*}
\wt(E_1 \cup E_2) &= \wt(E_1) + \wt(E_2) - \wt(E_1 \cap E_2) \\
&\geq \alpha \wt(V_1) + \alpha \wt(V_2) - \alpha \wt( V_1 \cap V_2) = \alpha \wt(V_1 \cup V_2).
\end{align*}
Since no subgraph in $H$ has density exceeding $\alpha$, the above inequality is in fact an equality, which yields the claims of the Lemma.
\end{proof}
\begin{corollary}
Let $\alpha = \max\limits_{H^\prime \subset H} \alpha(H^\prime)$. Then there exists a unique maximal subgraph in $H$ of density $\alpha$.
\end{corollary}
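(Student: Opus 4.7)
The plan is to extract the unique maximal subgraph of density $\alpha$ as the union of all density-$\alpha$ subgraphs, using Lemma \ref{boolean} as the key closure property. Let $\mathcal{F}$ denote the family of all subgraphs $H' \subset H$ with $\alpha(H') = \alpha$. By the definition of $\alpha$ as the maximum, $\mathcal{F}$ is non-empty, and since $H$ has finitely many vertices and hyperedges, $\mathcal{F}$ is also finite.

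First I would observe that the hypothesis of Lemma \ref{boolean} is exactly met: because $\alpha$ equals the maximum over all subgraphs of $H$, there are no subgraphs of density exceeding $\alpha$. Thus for any $H_1, H_2 \in \mathcal{F}$ we have $H_1 \cup H_2 \in \mathcal{F}$. Proceeding by induction on $|\mathcal{F}|$, the subgraph
$$\widehat{H} = \bigcup_{H' \in \mathcal{F}} H' = \left( \bigcup_{H' \in \mathcal{F}} V(H'), \, \bigcup_{H' \in \mathcal{F}} E(H') \right)$$
lies in $\mathcal{F}$. By construction every element of $\mathcal{F}$ is contained in $\widehat{H}$, so $\widehat{H}$ is a maximal element of $\mathcal{F}$ under containment.

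For uniqueness, suppose $\widetilde{H}$ is any maximal element of $\mathcal{F}$. Then $\widetilde{H} \cup \widehat{H} \in \mathcal{F}$ by Lemma \ref{boolean}, and it contains $\widetilde{H}$; maximality of $\widetilde{H}$ forces $\widetilde{H} \cup \widehat{H} = \widetilde{H}$, i.e. $\widehat{H} \subset \widetilde{H}$. Combined with $\widetilde{H} \subset \widehat{H}$ from the previous paragraph, this gives $\widetilde{H} = \widehat{H}$, establishing uniqueness.

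There is no real obstacle in this argument; it is a finite boolean-closure argument of the kind standard for submodular-type set systems, and Lemma \ref{boolean} does all of the work. The only point requiring a moment of care is verifying that the hypothesis of Lemma \ref{boolean} applies globally to $H$ and not just pairwise, which is immediate from the choice of $\alpha$ as the density maximum over all subgraphs.
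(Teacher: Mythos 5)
Your proof is correct and follows exactly the route the paper intends: the corollary is stated as an immediate consequence of Lemma \ref{boolean}, and the union-closure argument over the finite family of density-$\alpha$ subgraphs is the standard (and evidently intended) justification. No gaps.
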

\begin{definition}
The spectral decomposition of a weighted hypergraph $H$ is a partitioning 
$$V = V_{\alpha_1} \ \dot\cup \ V_{\alpha_2} \ \dot\cup \ldots \dot\cup \ V_{\alpha_k}, \ \ 
E = E_{\alpha_1} \ \dot\cup \ E_{\alpha_2} \ \dot\cup \ldots \dot\cup \ E_{\alpha_k}, $$
defined inductively as $(V_{\alpha_t}, E_{\alpha_t})$ being the maximal subgraph of maximum density $\alpha_t$ in the quotient graph $(V \backslash \mathop\cup\limits_{r=1}^{t-1} V_{\alpha_r},
E \backslash \mathop\cup\limits_{r=1}^{t-1} E_{\alpha_r})$ for $t = 1, \ldots, k$.
\end{definition}
Clearly, for the spectral decomposition, $\alpha_1 > \alpha_2 > \ldots > \alpha_k > 0$.

The following theorem is an immediate corollary of Theorem \ref{main} and Remark \ref{block}:

\begin{theorem}
\label{colsum}
Let $(V = \mathop\cup\limits_{r=1}^k V_{\alpha_r}, E = \mathop\cup\limits_{r=1}^k E_{\alpha_r})$ be the spectral decomposition of a weighted hypergraph $H$. Then there exists 
a support matrix $A$ for $H$ such that for any vertex $v_j \in V_{\alpha_r}$ the column sum
$s_j (A)$ is equal to $\alpha_r$. Such a matrix will have a block decomposition with blocks 
corresponding to the factors of the spectral decomposition 
$(V_{\alpha_1}, E_{\alpha_1}), (V_{\alpha_2}, E_{\alpha_2}), \ldots,
(V_{\alpha_k}, E_{\alpha_k})$. 
\end{theorem}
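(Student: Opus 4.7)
The plan is to build the support matrix factor by factor and then use Remark \ref{block} to obtain the block decomposition for any such $A$. First I would fix $r$ and view the factor $(V_{\alpha_r}, E_{\alpha_r})$ as a weighted hypergraph in its own right, with weights inherited from $H$. By the inductive construction of the spectral decomposition, this factor has density exactly $\alpha_r$ and is the maximal subgraph of maximum density in the quotient $(V\setminus\cup_{s<r}V_{\alpha_s},\,E\setminus\cup_{s<r}E_{\alpha_s})$; in particular no subgraph of $(V_{\alpha_r}, E_{\alpha_r})$ has density exceeding $\alpha_r$. Proposition \ref{existence}(a) therefore produces a support matrix $A_r$ for this factor with every column sum equal to $\alpha_r$.

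Next I would assemble the matrix $A$ by placing each $A_r$ in the block indexed by rows $E_{\alpha_r}$ and columns $V_{\alpha_r}$, filling the remaining entries with zeros. To verify $A\in\CS(H)$, I check the three defining properties: nonnegativity is immediate; the support condition holds because if $e_i\in E_{\alpha_r}$ and $a_{ij}\neq 0$ then $v_j\in V_{\alpha_r}$ and $v_j$ lies in the support of $e_i$ in the factor, which is $\supp_H(e_i)\cap V_{\alpha_r}\subset \supp_H(e_i)$; and the row condition $\sum_j w_j a_{ij}=u_i$ for $e_i\in E_{\alpha_r}$ reduces to the corresponding row condition for $A_r$ in the factor. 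The column sum property $s_j(A)=\alpha_r$ for $v_j\in V_{\alpha_r}$ then holds by construction.

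To obtain the block decomposition claim for an arbitrary such $A$ (not only the one just constructed), I would peel off the blocks inductively using Remark \ref{block}. Since $\alpha_1$ is the maximum density of any subgraph of $H$ and $s_j(A)=\alpha_1$ for every $v_j\in V_{\alpha_1}$, we have $s_{\max}(A)=\alpha_1$, so Remark \ref{block} applied to the subgraph $(V_{\alpha_1},E_{\alpha_1})$ forces $a_{ij}=0$ whenever $v_j\in V_{\alpha_1}$, $e_i\notin E_{\alpha_1}$ (and the other off-block entries vanish automatically from the subgraph condition). The restriction of $A$ to rows $E\setminus E_{\alpha_1}$ and columns $V\setminus V_{\alpha_1}$ is then a support matrix of the quotient $H/H_{\alpha_1}$ whose column sums on $V_{\alpha_r}$ ($r\geq 2$) still equal $\alpha_r$; iterating the same argument within this quotient produces the successive block splits along $(V_{\alpha_2},E_{\alpha_2}),\ldots,(V_{\alpha_k},E_{\alpha_k})$.

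The main obstacle I anticipate is the bookkeeping required at the inductive step: I must confirm that the restriction of $A$ to the complement of the first block is genuinely a support matrix of the quotient hypergraph $H/H_{\alpha_1}$ (in the sense of Section \ref{hyper}), and that the standing technical assumption that every vertex lies in some hyperedge and every hyperedge has non-empty support is inherited by each factor, so that Proposition \ref{existence}(a) and Remark \ref{block} can be applied to it. Once these compatibility checks are dispatched, the proof is indeed an immediate iteration of Theorem \ref{main} and Remark \ref{block} as advertised.
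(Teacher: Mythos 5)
Your proposal is correct and follows essentially the same route as the paper: the paper obtains this theorem as an immediate consequence of Proposition \ref{existence}(a) applied to each factor of the spectral decomposition (assembling the resulting matrices as diagonal blocks with zeros elsewhere, exactly as in the proof of Theorem \ref{eigen}) together with Remark \ref{block} for the block structure. The compatibility checks you flag at the end do go through — a hyperedge of a factor cannot have empty support, since otherwise adjoining it to the preceding maximal subgraph would raise its density — so your argument is complete in the same sense as the paper's.
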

Let us now give a proof of Theorem \ref{eigen}.
We begin by showing that for any subgraph $H^\prime = (V^\prime, E^\prime) \subset H$ and any support matrix $A \in \CS(H)$
 inequality $\alpha(H^\prime) \leq \| \tA^T \tA \|$ holds. Construct vector $x \in \R^n$
 with 
 $$x_j = \left\{ 
 \begin{matrix}
 \sqrt{w_j}, \ \text{if \ } v_j \in V^\prime, \\
 0, {\hskip 0.6cm} \text{otherwise.}
 \end{matrix} \right.$$
 Then $(x, x) = \wt(V^\prime)$. Let us compute vector $U^{-\frac {1}{2}} A W^{\frac {1}{2}} x$.
 We have
 $$(W^{\frac {1}{2}} x)_j = \left\{ 
 \begin{matrix}
 w_j, \ \text{if \ } v_j \in V^\prime, \\
 0, {\hskip 0.4cm} \text{otherwise.}
 \end{matrix} \right.$$
 If $e_i \in E^\prime$ then $\supp(e_i) \subset V^\prime$. Recalling the defining properties of a support matrix, we get
$$(A W^{\frac {1}{2}} x)_i = \left\{ 
 \begin{matrix}
 u_i, {\hskip 2cm} \text{if } e_i \in E^\prime,  \\
 \text{non-negative}, \ \text{otherwise,} 
 \end{matrix} \right.$$
 and
$$(U^{-\frac {1}{2}} A W^{\frac {1}{2}} x)_i = \left\{ 
 \begin{matrix}
 \sqrt{u_i}, {\hskip 1.7cm} \text{if } e_i \in E^\prime, \\
 \text{non-negative}, \  \text{otherwise,}
 \end{matrix} \right.$$
 From this we see that $(\tA x, \tA x) \geq \wt(E^\prime)$. Thus
 $$\alpha(H^\prime) \leq \frac{(\tA x, \tA x)}{(x, x)} \leq \| \tA^T \tA \| .$$
Since $A$ here is an arbitrary support matrix, and $H^\prime$ is an arbitrary subgraph, we get that
$$\max\limits_{H^\prime \subset H} \alpha(H^\prime) \leq \min\limits_{A \in \CS(H)} \| \tA^T \tA \|.$$
Let us show that in fact we have an equality by showing that for a maximum density subgraph there exists a support matrix on which the equality holds.

Let $H_{\alpha_1} = (V_{\alpha_1}, E_{\alpha_1})$ be the maximal subgraph in $H$ of maximum density $\alpha_1$. Consider the spectral decomposition of $H$. For each factor 
$(V_{\alpha_r}, E_{\alpha_r})$ we can construct a support matrix with all column sums equal to $\alpha_r$. Construct a support matrix $A$ for $H$ by arranging support matrices for each factor as blocks, and placing all zeros outside of these blocks. Then for $v_j \in V_{\alpha_r}$ the column sum $s_j (A)$ equals $\alpha_r$. We have constructed above eigenvectors 
$x^{(1)}, \ldots, x^{(k)}$ for $\tA^T \tA$ with eigenvalues $\alpha_1, \ldots, \alpha_k$. 
Each vector $x^{(r)}$ has positive entries in positions corresponding to $V_{\alpha_r}$ and zeros elsewhere.

Matrix $\tA^T \tA$ is symmetric and non-negative-definite. Thus its eigenvalues are real and non-negative. Its eigenvectors corresponding to distinct eigenvalues will be mutually orthogonal. Also $\tA^T \tA$ has non-negative entries. By Perron-Frobenius Theorem, $\tA^T \tA$ has a dominant eigenvector with all non-negative components (\cite{G}, Theorem XIII.3). This vector can not have a zero dot product with all of the vectors $x^{(1)}, \ldots, x^{(k)}$. Thus the dominant eigenvalue is one of $\alpha_1, \ldots, \alpha_k$, which means that the dominant eigenvalue of $A$ is precisely $\alpha_1$, the maximum density of a subgraph in $H$. This implies the claim of Theorem \ref{eigen}. 

Block decomposition of an optimal support matrix given by Theorem \ref{colsum} may be refined even further. Consider one of the factors $H_\alpha = (V_\alpha, E_\alpha)$ in the spectral decomposition of $H$. By definition, $H_\alpha$ has no subgraphs of density exceeding $\alpha$. Then by Lemma \ref{boolean}, the set of subgraphs of density $\alpha$ in $H_\alpha$ forms a distributive lattice. Consider the factors in this lattice (subgraphs of density $\alpha$ or quotients of two nested subgraphs of density  $\alpha$). We call such a factor simple if it has no proper subgraphs of density $\alpha$. We get partitionings of $V_\alpha$ and $E_\alpha$ corresponding to the simple factors of this lattice and an optimal support matrix will have a block structure with blocks corresponding to these simple factors.

\section{Duality}
\label{dualty}

In this section we would like to study the relationship between the spectral decompositions of a hypergraph $H$ and its dual $H^*$.

\begin{theorem}
\label{dual}
Let $\left\{ (V_1, E_1), \ldots, (V_k, E_k) \right\}$ be the spectral decomposition with densities $\alpha_1 > \ldots > \alpha_k > 0$ for a weighted hypergraph $H$.
Then $\left\{ (E_k, V_k), \ldots, (E_1, V_1) \right\}$ is the spectral decomposition with densities $\alpha_k^{-1} > \ldots > \alpha_1^{-1} > 0$ for the dual weighted hypergraph $H^*$.
\end{theorem}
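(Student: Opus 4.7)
The plan is to proceed by induction on $k$. At each inductive step I would identify the top factor of the spectral decomposition of $H^*$ as $(E_k, V_k)$ with density $\alpha_k^{-1}$, then reduce the problem to the dual of the subgraph $H' = (V_1 \cup \ldots \cup V_{k-1}, E_1 \cup \ldots \cup E_{k-1})$ of $H$. The base case $k = 1$ follows immediately from Lemma \ref{density}(c): no subgraph of $H^*$ can exceed density $\alpha_1^{-1}$, so $H^* = (E_1, V_1)$ is its own unique spectral factor.

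For the first key step, I would transfer an optimal support matrix $A$ for $H$ produced by Theorem \ref{colsum} into a support matrix $B$ for $H^*$ by setting
$$b_{ji} = \frac{w_j \, a_{ij}}{u_i \, s_j(A)}.$$
A direct computation gives $\sum_i u_i b_{ji} = w_j$, which is precisely the defining identity of a support matrix for $H^*$ (where the roles of $w$ and $u$ interchange). Using the block structure of $A$ with $s_j(A) = \alpha_r$ on $V_r$, the matrix $B$ inherits the same block structure and its column sums are $s_i(B) = \alpha_r^{-1}$ for $e_i \in E_r$. Hence $s_{\max}(B) = \alpha_k^{-1}$, and Proposition \ref{ineq} applied to $H^*$ with $B$ bounds the density of every subgraph of $H^*$ by $\alpha_k^{-1}$.

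The second key step is to show that $(E_k, V_k)$ achieves this bound and is the unique maximal subgraph of $H^*$ of density $\alpha_k^{-1}$. It is a subgraph of $H^*$ by part (b) of the elementary duality lemma of Section \ref{hyper}, since $(V_k, E_k)$ is the terminal quotient in the spectral chain of $H$; its density in $H^*$ equals $\wt(V_k)/\wt(E_k) = \alpha_k^{-1}$. The main obstacle is uniqueness. Inspecting the equality case in the proof of Proposition \ref{ineq}, any subgraph $(\tilde V, \tilde E) \subset H^*$ with density $\alpha_k^{-1}$ must have $s_i(B) = \alpha_k^{-1}$ for every $e_i \in \tilde V$, forcing $\tilde V \subset E_k$. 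To deduce $\tilde E \subset V_k$, I would establish the auxiliary fact that every $v \in V_r$ lies in $\supp_H(e)$ for some $e \in E_r$; otherwise removing $v$ from the $r$-th factor would strictly raise its density, contradicting maximality of $\alpha_r$ in the corresponding quotient. Combined with the chain property $\supp_H(e) \subset V_1 \cup \ldots \cup V_s$ for $e \in E_s$, this would force any $v \in \tilde E \cap V_r$ with $r < k$ to violate $\supp^*(v) \subset \tilde V \subset E_k$. Hence $\tilde E \subset V_k$, and the corollary to Lemma \ref{boolean} confirms $(E_k, V_k)$ as the unique maximal max-density subgraph of $H^*$.

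For the inductive step I would identify the quotient $H^*/(E_k, V_k)$ with $(H')^*$: their vertex and hyperedge sets coincide as $(E \setminus E_k, V \setminus V_k)$, and the support functions agree because subgraph supports are inherited from $H$. A short argument using uniqueness of the maximal max-density subgraph at each stage shows that the spectral decomposition of $H'$ is exactly $\{(V_r, E_r)\}_{r=1}^{k-1}$. By the inductive hypothesis applied to $H'$, the spectral decomposition of $(H')^*$ is $\{(E_{k-1}, V_{k-1}), \ldots, (E_1, V_1)\}$ with densities $\alpha_{k-1}^{-1} > \ldots > \alpha_1^{-1}$. Prepending the top factor $(E_k, V_k)$ of density $\alpha_k^{-1}$ then yields the claimed spectral decomposition of $H^*$.
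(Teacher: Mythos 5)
Your proof is correct and follows essentially the same route as the paper: the heart of both arguments is the transfer of an optimal support matrix $A$ for $H$ (from Theorem \ref{colsum}) to the matrix $B$ with $b_{ji} = \alpha_r^{-1} u_i^{-1} a_{ij} w_j$, which is then verified to be a support matrix for $H^*$ with column sums $\alpha_r^{-1}$ on the blocks. The only organizational difference is that the paper packages the remaining identification step as Proposition \ref{recover} (itself an induction on $k$ using Proposition \ref{ineq}), whereas you run the induction directly on the dual side and supply your own uniqueness argument via the equality case of Proposition \ref{ineq} together with the observation that every $v \in V_r$ lies in the support of some $e \in E_r$; both verifications are sound.
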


To prove this theorem we will first show that an optimal support matrix determines the spectral decomposition of a hypergraph.

\begin{proposition}
\label{recover}
Suppose $\alpha_1 > \alpha_2 > \ldots > \alpha_k$. Let
\begin{equation}
\label{sdec}
V = V_{\alpha_1} \ \dot\cup \ V_{\alpha_2} \ \dot\cup \ldots \dot\cup \ V_{\alpha_k}, \ \ 
E = E_{\alpha_1} \ \dot\cup \ E_{\alpha_2} \ \dot\cup \ldots \dot\cup \ E_{\alpha_k} 
\end{equation}
be partitionings of the sets of vertices and hyperedges of a weighted hypergraph $H$ such that for every $t = 1, \ldots, k$,
$(\mathop\cup\limits_{r=1}^t V_{\alpha_r}, \mathop\cup\limits_{r=1}^t E_{\alpha_r})$ is a subgraph in $H$
and $\wt(E_r) / \wt(V_r) = \alpha_r$. Let $A$ be a support matrix for $H$ such that for every 
vertex $v_j \in V_{\alpha_r}$ the column sum $s_j(A)$ is equal to $\alpha_r$. Then (\ref{sdec})
is the spectral decomposition of $H$.
\end{proposition}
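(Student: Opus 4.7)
The plan is to induct on $k$, the number of factors, by showing that $(V_{\alpha_1}, E_{\alpha_1})$ is the unique maximal subgraph of maximum density in $H$ and then passing to the quotient $H/(V_{\alpha_1}, E_{\alpha_1})$ equipped with a restricted support matrix. The main obstacle will be the equality analysis: one must exploit both inequalities in the chain appearing in the proof of Proposition \ref{ineq} to force $V'\subset V_{\alpha_1}$ for any subgraph $H'=(V',E')$ attaining density $\alpha_1$, and then bridge from there to $E'\subset E_{\alpha_1}$ via the block structure of Remark \ref{block} applied to $(V_{\alpha_1}, E_{\alpha_1})$ itself.

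Concretely, since the column sums of $A$ take values in $\{\alpha_1,\ldots,\alpha_k\}$ and $\alpha_1$ is the largest, $s_{\max}(A)=\alpha_1$, so Proposition \ref{ineq} guarantees no subgraph of $H$ has density above $\alpha_1$, and $(V_{\alpha_1}, E_{\alpha_1})$ attains this bound by hypothesis. To see it is the maximal such subgraph, let $H'=(V',E')$ satisfy $\alpha(H')=\alpha_1$. Tracing equality throughout the chain of inequalities in the proof of Proposition \ref{ineq} forces $s_j(A)=\alpha_1$ for every $j\in V'$, which combined with the column-sum hypothesis on $A$ gives $V'\subset V_{\alpha_1}$. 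Remark \ref{block}, applied to the subgraph $(V_{\alpha_1}, E_{\alpha_1})$ and matrix $A$, yields the block structure $a_{ij}=0$ whenever $v_j\in V_{\alpha_1}$ and $e_i\notin E_{\alpha_1}$. For any $e_i\in E'$ we have $\supp(e_i)\subset V'\subset V_{\alpha_1}$, so the identity $u_i=\sum_{j=1}^n w_j a_{ij}=\sum_{j\in V_{\alpha_1}} w_j a_{ij}$ would collapse to $0$ if $e_i\notin E_{\alpha_1}$, contradicting $u_i>0$. Hence $E'\subset E_{\alpha_1}$, so $(V_{\alpha_1}, E_{\alpha_1})$ is indeed maximal.

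For the inductive step, let $H_1=H/(V_{\alpha_1}, E_{\alpha_1})$ and let $A_1$ be the submatrix of $A$ on rows indexed by $E\setminus E_{\alpha_1}$ and columns indexed by $V\setminus V_{\alpha_1}$. Using Remark \ref{block} once more, row sums of $A_1$ remain equal to $u_i$ (the entries in columns of $V_{\alpha_1}$ were zero for $e_i\notin E_{\alpha_1}$), and column sums satisfy $s_j(A_1)=s_j(A)=\alpha_r$ for $v_j\in V_{\alpha_r}$ with $r\geq 2$ (the entries in rows of $E_{\alpha_1}$ were zero for $v_j\notin V_{\alpha_1}$); the support condition of $A_1$ relative to $H_1$ is immediate. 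The reduced partitioning $(V_{\alpha_2}, E_{\alpha_2}),\ldots,(V_{\alpha_k}, E_{\alpha_k})$ satisfies the hypotheses of the proposition on $H_1$, since each partial union remains a subgraph of $H_1$ (because $\supp_{H_1}(e_i)=\supp_H(e_i)\setminus V_{\alpha_1}$) and the factor densities are inherited from $H$. The induction hypothesis then identifies it as the spectral decomposition of $H_1$, completing the proof.
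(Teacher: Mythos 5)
Your proof is correct and follows essentially the same strategy as the paper's: induction on $k$, showing via Proposition \ref{ineq} that $(V_{\alpha_1},E_{\alpha_1})$ is the maximal subgraph of maximum density, and then passing to the quotient with the restricted support matrix. The only divergence is in the maximality sub-step, where you trace equality through the inequalities of Proposition \ref{ineq} and invoke Remark \ref{block} to get $V'\subset V_{\alpha_1}$ and $E'\subset E_{\alpha_1}$ directly, whereas the paper deduces maximality from the fact that the quotient hypergraph has no subgraph of density exceeding $\alpha_2<\alpha_1$; both routes are valid and yours is actually more explicit.
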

\begin{proof}
We prove the Proposition by induction on $k$. First, let us show that $(V_{\alpha_1}, E_{\alpha_1})$ is the maximal subgraph of maximum density in $H$. The density of this subgraph is $\alpha_1$ and it follows from Proposition \ref{ineq} that $\alpha_1$ is the maximum density of a subgraph in $H$. By the same Proposition, the quotient hypergraph $H^{\prime\prime} = (V \backslash V_{\alpha_1}, E \backslash E_{\alpha_1})$ has no subgraphs of density exceeding $\alpha_2$. This implies that 
$(V_{\alpha_1}, E_{\alpha_1})$ is the maximal subgraph of maximum density. By induction assumption, $(\mathop\cup\limits_{r=2}^k V_{\alpha_r}, \mathop\cup\limits_{r=2}^k E_{\alpha_r})$ is the spectral decomposition of the quotient graph $H^{\prime\prime}$.
The claim of the Proposition \ref{recover} now follows.
\end{proof}

To prove the duality Theorem \ref{dual}, we show how to construct an optimal support matrix for the dual hypergraph $H^*$ from an optimal support matrix $A$ for $H$. Let $A$ be an optimal support matrix given by Theorem \ref{colsum}.

Let us construct a support matrix for the dual hypergraph $H^*$. Let $B$ be an $n \times m$ matrix with entries
$$b_{ji} = \alpha_r^{-1} u_i^{-1} a_{ij} w_j,$$
where $v_j \in V_{\alpha_r}$ in the spectral decomposition of $H$. Clearly the entries of $B$ are non-negative, and $a_{ij} = 0$ whenever vertex $e_i$ does not belong to hyperedge $v_j$ in $H^*$, and hence $b_{ji} = 0$. Let us evaluate weighted row sums in $B$:
$$\sum_{i=1}^m u_i b_{ji} = \alpha_r^{-1} w_j \sum_{i=1}^m a_{ij} = w_j.$$
Thus matrix $B$ is a support matrix for $H^*$. Let us evaluate the column sums for $B$. Since
$B$ has a block structure, for $i$ with $e_i \in E_{\alpha_r}$ all non-zero entries $b_{ji}$ in column $i$ of $B$ occur only in rows $j$ with $v_j \in V_{\alpha_r}$. Suppose $e_i \in E_{\alpha_r}$. Then 
$$\sum_{j=1}^n b_{ji} = \alpha_r^{-1} u_i^{-1} \sum_{j=1}^n w_j a_{ij} = \alpha_r^{-1}.$$

We can now see that matrix $B$ satisfies the conditions of Proposition \ref{recover} for $H^*$.
We have $\alpha_k^{-1} > \alpha_{k-1}^{-1} > \ldots > \alpha_1^{-1}$. Quotient graphs for $H$ correspond to subgraphs in $H^*$. Thus for each $t$, $(\mathop\cup\limits_{r=t}^k E_{\alpha_r}, \mathop\cup\limits_{r=t}^k V_{\alpha_r})$ is a subgraph in $H^*$. 
We have $\wt(V_{\alpha_r})/\wt(E_{\alpha_r}) = \alpha_r^{-1}$ and for every $e_i \in E_{\alpha_r}$ the column sum in $B$ equals $\alpha_r^{-1}$. Thus by Proposition \ref{recover}, the dual of the spectral decomposition of $H$ is the spectral decomposition of $H^*$.

\section{Refinement of the Dulmage-Mendelsohn decomposition}
\label{dulm}

In this section we are going to show that the spectral decomposition of a hypergraph is a refinement of the Dulmage-Mendelsohn decomposition. For this section we will assume that all weights $u_i$ and $w_j$ are equal to 1.

Let us recall the construction of the Dulmage-Mendelsohn decomposition of a hypergraph $H$ \cite{DM}.
This construction is based on the notion of the minimal exterior cover of $H$.

\begin{definition} A pair of subsets $(\hV, \hE)$, $\hV \subset V$, $\hE \subset E$, is called an exterior cover of $H$ if for any pair $v\in V$, $e \in E$ with $v \in \supp(e)$, either
$v \in \hV$ or $e \in \hE$. An exterior cover $(\hV, \hE)$ is called minimal if it minimizes
$|\hV| + |\hE|$ over all exterior covers of $H$.
\end{definition}

Let $\M$ be the set of all minimal exterior covers of $H$. Set
\begin{align*}
V^+ = \mathop\cap\limits_{(\hV, \hE) \in \M} \hV, \quad
V^- = \mathop\cap\limits_{(\hV, \hE) \in \M} V \backslash \hV, \quad
V^0 = V \backslash ( V^+ \cup V^-), \\
E^+ = \mathop\cap\limits_{(\hV, \hE) \in \M} E \backslash \hE, \quad
E^- = \mathop\cap\limits_{(\hV, \hE) \in \M} \hE, \quad
E^0 = E \backslash ( E^+ \cup E^-).
\end{align*}

Dulmage and Mendelsohn proved that $(V^+, E^+)$ and $(V^+ \cup V^0, E^+ \cup E^0)$ are subgraphs in $H$ \cite{DM}.

\begin{theorem}
Let $(V^+ \, \dot\cup \, V^0 \, \dot\cup \, V^-, E^+ \, \dot\cup \, E^0 \, \dot\cup \, E^-)$ be the 
Dulmage-Mendelsohn decomposition of hypergraph $H$, and let $\{(V_{\alpha_r}, E_{\alpha_r}) \, | \, r=1,\ldots,k \}$ be the spectral decomposition of $H$. Then 
\begin{align*}
V^+ = \mathop\cup\limits_{\alpha > 1} V_\alpha, \quad V^0 = V_1, \quad V^- = \mathop\cup\limits_{\alpha < 1} V_\alpha, \\
E^+ = \mathop\cup\limits_{\alpha > 1} E_\alpha, \quad E^0 = E_1, \quad E^- = \mathop\cup\limits_{\alpha < 1} E_\alpha.
\end{align*}
\end{theorem}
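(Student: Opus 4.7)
The plan is to reformulate the Dulmage-Mendelsohn data in terms of subgraphs that maximize the ``surplus'' $\sigma(H') := |E'| - |V'|$, and then use an optimal support matrix given by Theorem \ref{colsum} to identify these subgraphs with unions of spectral factors.

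First I would establish a bijection between exterior covers and subgraphs: given an exterior cover $(\hV, \hE)$, the pair $(V', E') := (\hV, E \setminus \hE)$ is a subgraph of $H$, since any $e \in E \setminus \hE$ must have $\supp(e) \subset \hV$ by the cover property; conversely $(\hV, \hE) := (V', E \setminus E')$ is an exterior cover for any subgraph $(V', E')$. Under this bijection $|\hV| + |\hE| = |V'| + |E| - |E'| = |E| - \sigma(H')$, so minimal exterior covers correspond precisely to \emph{surplus-maximizing} subgraphs. Consequently, with $\M$ the family of minimal covers and $\M'$ the family of maximum-surplus subgraphs, one has
$$V^+ = \bigcap_{H' \in \M'} V', \quad V^+ \cup V^0 = \bigcup_{H' \in \M'} V', \quad E^+ = \bigcap_{H' \in \M'} E', \quad E^+ \cup E^0 = \bigcup_{H' \in \M'} E'.$$
(A small adaptation of Lemma \ref{boolean} shows $\M'$ is closed under $\cup$ and $\cap$, so these intersections and unions are themselves maximum-surplus subgraphs.)

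Next I would pick an optimal support matrix $A$ from Theorem \ref{colsum} with column sums $s_j(A) = \alpha_r$ on each $V_{\alpha_r}$. Copying the computation in the proof of Proposition \ref{ineq}, for any subgraph $H' = (V', E')$,
$$|E'| \;=\; \sum_{i \in E'} \sum_{j \in V'} a_{ij} \;\leq\; \sum_{j \in V'} s_j(A) \;=\; \sum_{r=1}^k \alpha_r \, |V' \cap V_{\alpha_r}|,$$
hence
$$\sigma(H') \;\leq\; \sum_{r=1}^k (\alpha_r - 1)\, |V' \cap V_{\alpha_r}| \;\leq\; \sum_{\alpha_r > 1}(\alpha_r - 1)\, |V_{\alpha_r}|.$$
Equality in the second bound forces $V' \cap V_{\alpha_r} = V_{\alpha_r}$ for every $\alpha_r > 1$ and $V' \cap V_{\alpha_r} = \emptyset$ for every $\alpha_r < 1$, i.e.\ every maximum-surplus subgraph satisfies $\bigcup_{\alpha_r > 1} V_{\alpha_r} \subset V' \subset \bigcup_{\alpha_r \geq 1} V_{\alpha_r}$.

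To finish, I would verify that both extremes in this sandwich are actually attained. Set $H^{(+)} := (\bigcup_{\alpha_r > 1} V_{\alpha_r}, \bigcup_{\alpha_r > 1} E_{\alpha_r})$ and $H^{(\geq 1)} := (\bigcup_{\alpha_r \geq 1} V_{\alpha_r}, \bigcup_{\alpha_r \geq 1} E_{\alpha_r})$. Each is a subgraph, because the spectral decomposition is a chain of subgraphs, and by direct computation the surplus of each equals $\sum_{\alpha_r > 1}(\alpha_r - 1)|V_{\alpha_r}|$, so both lie in $\M'$. A subtle point: when forming a subgraph one should take \emph{all} edges $e$ with $\supp(e) \subset V'$. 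For $V' = \bigcup_{\alpha_r \geq 1} V_{\alpha_r}$ one must check that no edge $e \in E_{\alpha_s}$ with $\alpha_s < 1$ slips in. Since $(V_{\alpha_s}, E_{\alpha_s})$ is a subgraph of the quotient at stage $s$, the support of $e$ has a non-empty intersection with $V_{\alpha_s}$, which is disjoint from $\bigcup_{\alpha \geq 1} V_{\alpha_r}$; this rules out such edges and shows the edge set of $H^{(\geq 1)}$ is exactly $\bigcup_{\alpha_r \geq 1} E_{\alpha_r}$, and similarly for $H^{(+)}$. Combining with the sandwich gives $V^+ = \bigcup_{\alpha > 1} V_\alpha$, $V^+ \cup V^0 = \bigcup_{\alpha \geq 1} V_\alpha$, hence $V^0 = V_1$, and the analogous identities for the edge partition.

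The main obstacle I anticipate is the edge bookkeeping just described: the sandwich bound controls only the vertex part of a max-surplus subgraph, and one must argue carefully, using the stage-by-stage definition of the spectral decomposition, that the edges attached to those vertex sets are precisely the corresponding unions $\bigcup_{\alpha_r > 1} E_{\alpha_r}$ and $\bigcup_{\alpha_r \geq 1} E_{\alpha_r}$; everything else is a direct consequence of Theorem \ref{colsum} and the covers-to-subgraphs bijection.
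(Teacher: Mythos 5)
Your proposal is correct in outline and reaches the right conclusion, but it takes a genuinely different route from the paper's. The paper never leaves the language of exterior covers: it restricts an arbitrary minimal cover $(\hV,\hE)$ to each spectral factor $H_\alpha$, uses only the defining property of the spectral decomposition (subgraphs of $H_\alpha$ have density at most $\alpha$) to prove $|V_\alpha|\le|\hV\cap V_\alpha|+|\hE\cap E_\alpha|$ for $\alpha\ge 1$ and, by duality, $|E_\alpha|\le|\hV\cap V_\alpha|+|\hE\cap E_\alpha|$ for $\alpha\le 1$, sums over factors to certify that the two candidate covers $(\cup_{\alpha>1}V_\alpha,\cup_{\alpha\le1}E_\alpha)$ and $(\cup_{\alpha\ge1}V_\alpha,\cup_{\alpha<1}E_\alpha)$ are minimal, and then reads off from the equality analysis that every minimal cover satisfies $\hV\cap V_\alpha=V_\alpha$, $\hE\cap E_\alpha=\varnothing$ for $\alpha>1$ and the dual statement for $\alpha<1$ --- which pins down both the vertex and the edge part of the cover in one stroke. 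You instead translate minimal covers into surplus-maximizing subgraphs via $(\hV,\hE)\mapsto(\hV,E\setminus\hE)$ and bound the surplus globally using an optimal support matrix from Theorem \ref{colsum}; this avoids the duality step and nicely exhibits the support matrix as a certificate of optimality, but because your equality analysis constrains only the vertex set, you must add the (easy but necessary) observation that a surplus-maximizing subgraph contains every hyperedge supported on its vertex set, together with the stage-by-stage argument that no hyperedge of a factor $E_{\alpha_s}$ with $\alpha_s<1$ has its full support inside $\cup_{\alpha\ge1}V_\alpha$ --- and for the latter you should justify that its support in the quotient at stage $s$ is non-empty (otherwise adjoining it to $E_{\alpha_s}$ would raise the density above $\alpha_s$), rather than merely citing the subgraph property. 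Both arguments are sound; the paper's is more self-contained and has lighter edge bookkeeping, while yours makes explicit the connection to Theorem \ref{colsum} and to the surplus formulation of Dulmage--Mendelsohn theory.
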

\begin{proof}
First of all we point out that both 
\begin{equation}
\label{ext}
( \mathop\cup\limits_{\alpha > 1} V_\alpha, \mathop\cup\limits_{\alpha \leq 1} E_\alpha) \text{ \ and \ } ( \mathop\cup\limits_{\alpha \geq 1} V_\alpha, \mathop\cup\limits_{\alpha < 1} E_\alpha)
\end{equation}
 are exterior covers for $H$. 

Let $(\hV, \hE)$ be a minimal exterior cover for $H$. Consider $H_\alpha = (V_\alpha, E_\alpha)$ as a subgraph of a quotient hypergraph of $H$. Set $\hV_\alpha = \hV \cap V_\alpha$, $\hE_\alpha = \hE \cap E_\alpha$. Then $(\hV_\alpha, \hE_\alpha)$ is an exterior cover of $H_\alpha$. We would like to show that for $\alpha > 1$ we must have $\hV_\alpha = V_\alpha$,
$\hE_\alpha = \varnothing$ and for $\alpha < 1$ we must have $\hV_\alpha = \varnothing$,
$\hE_\alpha = E_\alpha$.

Fix $\alpha \geq 1$. Let $\hE_\alpha^\prime = E_\alpha \backslash \hE_\alpha$. By the definition of the exterior cover, hyperedges in $\hE_\alpha^\prime$ must have support in $\hV_\alpha$. Thus
$(\hV_\alpha, \hE_\alpha^\prime)$ is a subgraph in $H_\alpha$. Since the densities of subgraphs in $H_\alpha$ do not exceed $\alpha$, we have $|\hE_\alpha^\prime| \leq \alpha |\hV_\alpha|$.
Then 
$$|V_\alpha| = \frac{1}{\alpha} |E_\alpha|
= \frac{1}{\alpha} (|\hE_\alpha| + |\hE_\alpha^\prime|) 
\leq \frac{1}{\alpha} |\hE_\alpha| + |\hV_\alpha| 
\leq |\hE_\alpha| + |\hV_\alpha| .$$
By duality we get that for $\alpha \leq 1$ we have $|E_\alpha| \leq |\hE_\alpha| + |\hV_\alpha|$. Taking the sum over all $\alpha$, we get that $|\hV| + |\hE|$ is greater or equal to the sizes of exterior covers (\ref{ext}). Hence both exterior covers in (\ref{ext}) are minimal.

From the above computation we see that for any minimal exterior cover $(\hV, \hE)$ of $H$ we  must have $\hE_\alpha = \varnothing$, $\hV_\alpha = V_\alpha$ for $\alpha > 1$ and, by duality,
$\hV_\alpha  = \varnothing$, $\hE_\alpha = E_\alpha$ for $\alpha < 1$. Recalling the definition of the Dulmage-Mendelsohn decomposition, we obtain the claim of the Theorem.
\end{proof}

\section{Algorithm for finding a subgraph of the maximum density.}
\label{algo}

 Our approach is to find a support matrix $A$ in $\CS(H)$ with a minimum value of $s_{\max}(A)$ (or rather a support matrix $A$ with $s_{\max}(A)$ close to the optimal value) and then extract the optimal subgraph from the support matrix. We will run an iterative process calculating successive approximations leading to the optimal support matrix, decreasing the value of $s_{\max}(A)$ with each iteration.

 Let us present the steps of the algorithm and then discuss each step in detail.

\noindent
Step 0. Initialize a support matrix $A$.

\noindent
Step 1. For each row of the support matrix perform the Row Equalization Operation.

\noindent
Step 2. Iterate Step 1. Expected number of iterations is (conjecturally) $\log(|E|)$, 
where $|E|$ is the number of hyperedges (rows of $A$).

\noindent
Step 3. Use the resulting support matrix to extract a maximum density subgraph, and more generally, the spectral decomposition of $H$.

\

Now let us give the details.

Step 0. Here we present one reasonable way to initialize the support matrix. 

\noindent
Step 0.1. Set 
$a_{ij} = \left\{ 
\begin{matrix}
1, \ \ {\rm if \ } v_j \in \supp(e_i), \\
0, \ \ {\rm otherwise.} \hfill \\
\end{matrix} \right. $

\noindent
Step 0.2. For each column $j$ of $A$ compute the sum $d_j$ of elements of $A$ in that column (this is going to be the degree of the corresponding vertex). Divide column $j$ by $d_j$.

\noindent
Step 0.3. For each row $i$ of $A$ compute the weighted sum
$v_i = \mathop\sum\limits_{j=1}^n w_j a_{ij}.$
Multiply row $i$ by $u_i / v_i$.

Step 0.3 ensures that we have a proper weighted sum in each row of $A$. Our final goal is to minimize the maximum column sum in $A$. The purpose of Step 0.2 is to avoid the situation where columns corresponding to vertices of high degree create huge column sums.

Step 1. Row Equalization Operation. Consider a row $(a_{i1}, a_{i2}, \ldots, a_{in})$ of matrix $A$. 

Row $i$ of matrix $A$ corresponds to hyperedge $e_i$ in $H$ and by definition of the support matrix, we must have $a_{ij} = 0$ for every $j$ such that $v_j$ does not belong to hyperedge $e_i$. We are going to ignore all such entries. To simplify the notations, we will assume that hyperedge $e_i$ contains vertices $\{ v_1, v_2, \ldots, v_n \}$. Hence in the description of this Step, $n$ is not the total number of vertices in $H$, 
but rather the number of vertices in $e_i$.

Since the Row Equalization Operation deals
with a single row of $A$, within this Step we will denote the entries in this row simply as $(a_1, a_2, \ldots, a_n)$. 
We will also need to know the column sums $(s_1, s_2, \ldots, s_n)$ for matrix $A$.

 The goal of the Row Equalization Operation is to calculate new values for this row $(a_1^\prime, a_2^\prime, \ldots, a_n^\prime)$
in such a way that new column sums $(s_1^\prime, s_2^\prime, \ldots, s_n^\prime)$ possess the following two properties:

(1) There exists a value $\tilde{s}$ such that $s_j^\prime = \tilde{s}$ whenever $a_j^\prime \neq 0$.

(2) If $a_j^\prime = 0$ then $s_j^\prime \geq \tilde{s}$.

Of course, while doing this, we must also preserve the condition
$$\sum\limits_{j=1}^n w_j a_j = u_i. $$

Note that the column sum $s_j$ may be written as $s_j = a_j + b_j$, where $b_j$ is the sum in column $j$ in all rows, except for the current
row. Clearly, the values of $b_j$ will not change as a result of the Row Equalization Operation, hence
$$b_j = s_j - a_j = s_j^\prime - a_j^\prime.$$
In particular, we see from (2) that we shall set $a_j^\prime = 0$ if and only if $b_j \geq \tilde{s}$.
Determining the value of $\tilde{s}$ is the essential part of the Row Equalization Operation.

Let us restate the Row Equalization Problem in the style of grade 3 Math. We need to pour champagne into $n$ glasses of different shapes. We are given the lengths $b_1, \ldots, b_n$ of the stems of these glasses, and the areas of cross sections $w_1, \ldots, w_n$ (we assume that all glasses are cylindrical). We need to fill the glasses with champagne from a bottle of volume $u_i$ in such a way that the level of champagne $\ts$ is the same in all filled glasses, while the glasses with stems longer than $\ts$ remain empty.

\begin{figure}[h]
\centering
\begin{tikzpicture}
\def\toplev{3.2}
\def\bottomlev{2.4}
\draw[black, thick] (0, 0) -- (0.4 , 0);
\draw[black, thick] (1.6, 0) -- (2.0 , 0);
\draw[black, thick] (3.2, 0) -- (3.6 , 0);
\draw[black, thick] (4.8, 0) -- (5.2 , 0);
\draw[black, thick] (6.4, 0) -- (6.8 , 0);
\draw[black, thick] (0.2, 0.2) -- (0.2 , 1.4);
\draw[black, thick] (1.8, 0.2) -- (1.8 , 0.7);
\draw[black, thick] (3.4, 0.2) -- (3.4 , \bottomlev - 0.4);
\draw[black, thick] (5.0, 0.2) -- (5.0 , 0.45);
\draw[black, thick] (6.6, 0.2) -- (6.6 , 2.0);
\draw[black, thick] (0.0, 1.6) to[out=315, in=0] (0.2, 1.55);
\draw[black, thick] (0.4, 1.6) to[out=225, in=180] (0.2, 1.55);
\draw[black, thick] (1.5, 1.0)  to[out=315, in=0] (1.8, 0.93);
\draw[black, thick] (2.1 , 1.0) to[out=225, in=180] (1.8, 0.93);
\draw[black, thick] (2.9, \bottomlev) to[out=315, in=0] (3.4, \bottomlev-0.1);
\draw[black, thick] (3.9 , \bottomlev) to[out=225, in=180] (3.4, \bottomlev-0.1);
\draw[black, thick] (4.85, 0.6) to[out=315, in=0] (5.0, 0.56);
\draw[black, thick] (5.15 , 0.6) to[out=225, in=180] (5.0, 0.56);
\draw[black, thick] (6.2, 2.4) to[out=315, in=0] (6.6, 2.3);
\draw[black, thick] (7.0 , 2.4) to[out=225, in=180] (6.6, 2.3);
\draw[black, thick] (0.0, 1.6) -- (0.0 , \toplev);
\draw[black, thick] (0.4 , 1.6) -- (0.4 , \toplev);
\draw[black, thick] (1.5, 1.0) -- (1.5, \toplev);
\draw[black, thick] (2.1 , 1.0) -- (2.1 , \toplev);
\draw[black, thick] (2.9, \bottomlev) -- (2.9 , \toplev);
\draw[black, thick] (3.9 , \bottomlev) -- (3.9 , \toplev);
\draw[black, thick] (4.85, 0.6) -- (4.85 , \toplev);
\draw[black, thick] (5.15 , 0.6) -- (5.15 , \toplev);
\draw[black, thick] (6.2, 2.4) -- (6.2 ,\toplev);
\draw[black, thick] (7.0 , 2.4) -- (7.0 , \toplev);
\draw[black, dashed] (-1.0 , 2.0) -- (8.0 , 2.0);
\draw[black, thick] (0, 0) to[out=0, in=270] (0.2, 0.2);
\draw[black, thick] (0.4, 0) to[out=180, in=270] (0.2, 0.2);
\draw[black, thick] (1.6, 0) to[out=0, in=270] (1.8, 0.2);
\draw[black, thick] (2.0, 0) to[out=180, in=270] (1.8, 0.2);
\draw[black, thick] (3.2, 0) to[out=0, in=270] (3.4, 0.2);
\draw[black, thick] (3.6, 0) to[out=180, in=270] (3.4, 0.2);
\draw[black, thick] (4.8, 0) to[out=0, in=270] (5.0, 0.2);
\draw[black, thick] (5.2, 0) to[out=180, in=270] (5.0, 0.2);
\draw[black, thick] (6.4, 0) to[out=0, in=270] (6.6, 0.2);
\draw[black, thick] (6.8, 0) to[out=180, in=270] (6.6, 0.2);
\draw[black, thick] (0, 1.6) to[out=-45, in=90] (0.2, 1.4);
\draw[black, thick] (0.4, 1.6) to[out=225, in=90] (0.2, 1.4);
\draw[black, thick] (1.5, 1.0) to[out=-45, in=90] (1.8, 0.7);
\draw[black, thick] (2.1, 1.0) to[out=225, in=90] (1.8, 0.7);
\draw[black, thick] (2.9, \bottomlev) to[out=-45, in=90] (3.4, \bottomlev - 0.4);
\draw[black, thick] (3.9, \bottomlev) to[out=225, in=90] (3.4, \bottomlev - 0.4);
\draw[black, thick] (4.85, 0.6) to[out=-45, in=90] (5.0, 0.45);
\draw[black, thick] (5.15, 0.6) to[out=225, in=90] (5.0, 0.45);
\draw[black, thick] (6.2, 2.4) to[out=-45, in=90] (6.6, 2.0);
\draw[black, thick] (7.0, 2.4) to[out=225, in=90] (6.6, 2.0);
\draw[thick] (0.2, \toplev) ellipse (0.2 and 0.05);
\draw[thick] (1.8, \toplev) ellipse (0.3 and 0.07);
\draw[thick] (3.4, \toplev) ellipse (0.5 and 0.08);
\draw[thick] (5.0, \toplev) ellipse (0.15 and 0.03);
\draw[thick] (6.6, \toplev) ellipse (0.4 and 0.08);
\draw (0.2, 2.0) ellipse (0.2 and 0.05);
\draw (1.8, 2.0) ellipse (0.3 and 0.07);
\draw (5.0, 2.0) ellipse (0.15 and 0.03);
\draw (0.15, 1.7) circle (0.02);
\draw (0.27, 1.77) circle (0.02);
\draw (0.21, 1.87) circle (0.02);
\draw (1.8, 1.1) circle (0.02);
\draw (1.95, 1.25) circle (0.02);
\draw (1.7, 1.35) circle (0.02);
\draw (1.8, 1.5) circle (0.02);
\draw (1.9, 1.7) circle (0.02);
\draw (1.75, 1.85) circle (0.02);
\draw (4.95, 0.77) circle (0.02);
\draw (5.05, 0.89) circle (0.02);
\draw (5.02, 1.03) circle (0.02);
\draw (4.91, 1.2) circle (0.02);
\draw (4.98, 1.34) circle (0.02);
\draw (5.07, 1.57) circle (0.02);
\draw (5.09, 1.73) circle (0.02);
\draw (4.95, 1.85) circle (0.02);
\end{tikzpicture}
\end{figure}

\

The value of $\ts$ will be determined via the process of increasing a lower bound for it.
Using the language of the champagne model, we will be selecting a glass and testing whether there is enough champagne in the bottle to fill the glasses to the level of the bottom of the chosen glass. If we succeed, this level will become a new lower bound $b$ for $\ts$. We will keep track of the amount of champagne $u$ already dispensed, and the total cross section area $w$ of current partially filled glasses. We denote by $B$ the set of glasses for which it is currently undetermined whether they are going to be filled or not. Since all glasses with stems lower than $b$ will be filled, all elements in $B$ are greater than $b$.

\noindent
Step 1.0. Initialize with $b \leftarrow 0$, $u \leftarrow 0$, $w \leftarrow 0$,
$B \leftarrow \{ b_1, \ldots, b_n \}$.

\noindent
Step 1.1. Pick an element $b_j \in B$. The volume of champagne required to fill the glasses to level $b_j$ is
$$u^\prime = u + w (b_j - b) + \sum\limits_{\substack{b_k \in B \\ b_k \leq b_j}} w_k (b_j - b_k).$$
Here $u$ is the volume already poured, $w(b_j - b)$ is the additional amount required to raise the level in the partially filled glasses from $b$ to $b_j$, and the last sum represents the volume required to fill empty glasses that have stems shorter than $b_j$ to level $b_j$.

\noindent
Step 1.2. If $u^\prime \leq u_i$ then set
$$ u \leftarrow u^\prime, \quad b \leftarrow b_j, \quad w \leftarrow w + \sum\limits_{\substack{b_k \in B \\ b_k \leq b_j}} w_k$$
and remove from $B$ all elements $b_k$ with $b_k \leq b_j$.

\noindent
Step 1.3. If $u^\prime > u_i$ then remove from $B$ all elements $b_k$ with $b_k \geq b_j$.

\noindent
Step 1.4. Iterate Steps 1.1--1.3 until set $B$ is empty.

\noindent
Step 1.5. Now we know that filled glasses will be precisely those with $b_k \leq b$. The level $\ts$ is now determined by the amount of the remaining champagne:
$$\ts = b + (u_i - u)/w.$$

\noindent
Step 1.6. Set new values in the current row of the support matrix:
$$a_j^\prime = \left\{
\begin{matrix}
\ts - b_j \text{\ if \ } b_j \leq b, \\
0, \text{\ otherwise. } 
\end{matrix}
\right.
$$

We see that computational complexity of Steps 1.1--1.3 is linear in the size of $B$. On average, the size of $B$ decreases exponentially in Steps 1.2/1.3. As a result, computational complexity of the Row Equalization Procedure is linear in $n$ on average, and has complexity of $n^2$ in the worst case scenario (keep in mind that here $n$ is the size of the support of $e_i$ and not the size of $V$). 

 It is possible to give a more complicated algorithm which will have linear complexity in all cases, based on finding a median value in set $B$, which can be done with a linear complexity by applying the median of medians algorithm \cite{B}.

 It may be useful to keep track of the indices in each row corresponding to $b$, as well as the next highest value among $\{ b_1, \ldots, b_n \}$, until the next iteration
of the Row Equalization Operation for the same row, since in many instances these indices will not change and we will speed up our computation if we start the analysis of Step 1.1 with these two indices.

As an illustration, let us present the following example of the Row Equalization Operation. In this example we set all $w_j = 1$ and $u_i = 1$.

\begin{equation*}
\begin{matrix}
j & \hbox{\hskip 0.3cm} 1 \hbox{\hskip 0.3cm}  & \hbox{\hskip 0.3cm}  2 \hbox{\hskip 0.3cm}  &\hbox{\hskip 0.3cm}  3 \hbox{\hskip 0.3cm} & \hbox{\hskip 0.3cm} 4 \hbox{\hskip 0.3cm} & \hbox{\hskip 0.3cm} 5 \hbox{\hskip 0.3cm} & \hbox{\hskip 0.3cm} 6 \hbox{\hskip 0.3cm} \\
a_j & 0.3 & 0.1 & 0.3 & 0.15 & 0.1 & 0.05 \\
s_j & 1.8 & 1.5 & 1.3 & 0.9 & 1.0 & 1.2 \\
b_j & 1.5 & 1.4 & 1.0 & 0.75 & 0.9 & 1.15 \\
a_j^\prime & 0 & 0 & 0.2 & 0.45 & 0.3 & 0.05 \\
s_j^\prime & 1.5 & 1.4 & 1.2 & 1.2 & 1.2 & 1.2 \\
\end{matrix}
\end{equation*}

\

Step 3. Let us describe how we can extract a subgraph from the computed support matrix $A$.

 An optimal matrix will have the property that the maximum column sum $s_{\max} = \alpha(H^\prime)$ will occur in every column corresponding to the vertices of the densest subgraph $H^\prime$. In reality we will only have an approximation to the optimal matrix, so the column sums 
in the columns corresponding to the vertices of $H^\prime$ will only be close to $\alpha(H^\prime)$. For this reason we will be building the set of vertices $V$ and the set of hyperedges $E$ for a subhypergraph using an iterative procedure:

\noindent
 Step 3.0. Start with $V_\alpha = \varnothing$, $E_\alpha = \varnothing$. Determine the column with the maximum column sum in $A$ and add the corresponding vertex to set $V_\alpha$.

\noindent
 Step 3.1. For each new vertex $v_j$ added on the previous step, take the corresponding column of $A$ and for each $i$ with $a_{ij} \neq 0$
add hyperedge $e_i$ into $E_\alpha$ if it is not already there.

\noindent
 Step 3.2. For each new hyperedge $e_i$ added to $E_\alpha$ on the previous step, add all vertices that $e_i$ contains to $V_\alpha$, if these are not already in $V_\alpha$. 

 Iterate Steps 3.1-3.2 until there are no new vertices added to $V_\alpha$.

 It is sufficient to have a good approximation to an optimal support matrix for the optimal subgraph to emerge. We point out that any support matrix gives an upper bound on the density of subgraphs in $H$,
$$s_{\max} (A) \geq \alpha(H^\prime),$$
thus comparing the density of the extracted subgraph to the maximum column sum of the computed support matrix, we can assess how far are we away from the optimal pair. During the iterations of the Algorithm, the maximum column sum $s_{\max}(A)$ will be decreasing, and the density of a subgraph will be increasing until we reach the optimal subgraph.

For unweighted hypergraphs, if we achieve $s_{\max} (A) - \alpha(H^\prime) < 
1/d |V|$, where $d$ is the denominator of a rational number $\alpha(H^\prime)$, this will prove that $H^\prime$ is optimal.

The above algorithm can be easily parallelizable by assigning a subset of rows of $A$ to each processor. The only information that these processors must share is the vector of column sums.

In conclusion, this paper further develops density theory for weighted hypergraphs, connecting maximization of density to spectral and minimax problems for support matrices. Even though the problem we are solving is purely combinatorial, the proposed solution is based on a continuous optimization. We also present an efficient iterative algorithm for finding a maximum density subgraph in a given hypergraph. Each iteration of our algorithm is significantly faster than an iteration of a max-flow based algorithm, thus extending feasibility of solving DSP to even bigger hypergraphs.

\end{document}